\DeclareMathAlphabet{\mathpzc}{OT1}{pzc}{m}{it}
\newcommand{{\M}}{{\textsf{{L}$_\sharp^{2}$}}}
\newcommand{\A}{\mathbb A}
\newcommand{\B}{\mathbb B}
\newcommand{\bsfV}{\boldsymbol{\mathsf V}}
\newcommand{\bfsf}[1]{{\textbf{\textsf{#1}}}}
\newcommand{\Div}{\mbox{\rm div}\,}
\newcommand{\Int}[2]{{\displaystyle \int_{ #1}^{ #2}}}
\newcommand{\Lim}[1]{{\displaystyle \lim_{ #1}}}
\newcommand{\BCup}[2]{{\displaystyle \bigcup_{#1}^{#2}}}
\newcommand{\Frac}[2]{\displaystyle{\frac{\displaystyle{#1}}{\displaystyle{#2}}}}
\newcommand{\beea}{\begin{eqnarray}}
\newcommand{\eeea}{\end{eqnarray}}
\newcommand{\bfe}{{\mbox{\boldmath $e$}} }
\newcommand{\bfz}{{\mbox{\boldmath $z$}} }
\newcommand{\0}{{\mbox{\boldmath $0$}} }
\newcommand{\BF}{\begin{footnotesize}}
\newcommand{\EF}{\end{footnotesize}}
\newcommand{\ode}[2]{{\displaystyle \frac{\mbox{$d #1$}}{\mbox{$d #2$}}}}
\newcommand{\bi}{\begin{itemize}}
\newcommand{\ei}{\end{itemize}}
\newcommand{\ed}{\end{document}}
\newcommand{\be}{\begin{equation}}
\newcommand{\ba}{\begin{array}}
\newcommand{\ea}{\end{array}}
\newcommand{\ee}{\end{equation}}
\newcommand{\eeq}[1]{\label{eq:#1}\end{equation}}
\newcommand{\real}{{\mathbb R}}
\newcommand{\bfpsi}{\mbox{\boldmath $\psi$}}
\newcommand{\bfrho}{\mbox{\boldmath $\rho$}}
\newcommand{\bfx}{\mbox{\boldmath $x$}}
\newcommand{\bfy}{\mbox{\boldmath $y$}}
\newcommand{\bfxi}{\mbox{\boldmath $\xi$}}
\newcommand{\bfphi}{\mbox{\boldmath $\varphi$}}
\newcommand{\bfv}{{\mbox{\boldmath $v$}} }
\newcommand{\bfu}{{\mbox{\boldmath $u$}} }
\newcommand{\bfw}{{\mbox{\boldmath $w$}} }
\newcommand{\bfa}{{\mbox{\boldmath $a$}} }
\newcommand{\bfR}{{\mbox{\boldmath $R$}} }
\newcommand{\bfG}{{\mbox{\boldmath $G$}} }
\newcommand{\bfH}{{\mbox{\boldmath $H$}} }
\newcommand{\bfI}{{\mbox{\boldmath $I$}} }
\newcommand{\cald}{{\cal D}}
\newcommand{\cali}{{\cal I}}
\newcommand{\bfSigma}{\mbox{\boldmath $\Sigma$}}
\newcommand{\bfeta}{\mbox{\boldmath $\eta$}}
\newcommand{\bfV}{{\mbox{\boldmath $V$}} }
\newcommand{\bfU}{{\mbox{\boldmath $U$}} }
\newcommand{\bfb}{{\mbox{\boldmath $b$}} }
\newcommand{\bfn}{{\mbox{\boldmath $n$}} }
\def\Bbb R{\real}
\def\hat{\widehat}
\def\tilde{\widetilde}
\def\bar{\overline}
\newcommand{\bfchi}{\mbox{\boldmath $\chi$}}
\newcommand{\bfgamma}{\mbox{\boldmath $\gamma$}}
\newcommand{\bfdelta}{\mbox{\boldmath $\delta$}}
\newcommand{\ED}{\end{description}}
\newtheorem{definition}{Definition}[section]
\newcommand{\Bd}{\begin{definition}\begin{rm}}
\newcommand{\Ed}{\end{rm}\end{definition}}
\newtheorem{remark}{Remark}[section]
\newcommand{\Br}{\begin{remark}\begin{rm}}
\newcommand{\Er}{\end{rm}\end{remark}}
\newtheorem{proposition}{Proposition}[section]
\newcommand{\Bp}{\begin{proposition}\begin{sl}}
\newcommand{\EP}[1]{\end{sl}\label{proposition:#1}\end{proposition}}
\newcommand{\Bt}{\begin{theorem}\begin{sl}}
\newcommand{\Et}{\end{sl}\end{theorem}}
\newcommand{\Bl}{\begin{lemma}\begin{sl}}
\newcommand{\El}{\end{sl}\end{lemma}}
\newtheorem{theorem}{Theorem}[section]
\newtheorem{lemma}{Lemma}[section]
\newtheorem{corollary}{Corollary}[section]
\renewcommand{\eqref}[1]{{\rm (\ref{eq:#1})}}
\newcommand{\Bc}{\begin{corollary}\begin{sl}}
\newcommand{\Ec}{\end{sl}\end{corollary}}
\newcommand{\ET}[1]{\end{sl}\label{theorem:#1}\end{theorem}}
\newcommand{\EDD}[1]{\end{rm}\label{definition:#1}\end{definition}}
\newcommand{\EL}[1]{\end{sl}\label{lemma:#1}\end{lemma}}
\newcommand{\ER}[1]{\end{rm}\label{remark:#1}\end{remark}}
\newcommand{\EC}[1]{\end{sl}\label{corollary:#1}\end{corollary}}
\numberwithin{equation}{section}
\begin{document}

\title{Forced Oscillations of a   Spring-Mounted Body by a Viscous Liquid:\\  Rotational Case} 
\author{Denis Bonheure\thanks{D\'epartement de Math\'ematique, Universit\'e Libre de Bruxelles, Belgium}\,,
\  Giovanni P. Galdi\thanks{Department of Mechanical Engineering and Materials Science,University of Pittsburgh, USA.}
\  \&\,\ Clara Patriarca$^{*}$}
\date{}
\maketitle\noindent
	\noindent
\begin{abstract}
We study the periodic motions of the coupled system $\mathscr S$, consisting of an incompressible Navier-Stokes fluid interacting with a structure formed by a rigid body subject to {\em undamped} elastic restoring forces and torque around its rotation axis. The motion of $\mathscr S$ is driven by the uniform flow of the liquid, far away from the body, characterized by a time-periodic velocity field, $\bfV$, of frequency $f$. We show that the corresponding set of governing equations always possesses a time-periodic weak solution of the same frequency $f$, whatever $f>0$, the magnitude of $\bfV$ and the values of physical parameters. Moreover, we show that the amplitude of linear and rotational displacement is always pointwise in time uniformly bounded by one and the same constant depending on the data, regardless of whether $f$ is or is not close to a natural frequency of the structure. Thus, our result rules out the occurrence of resonant phenomena. 

\medskip\par\noindent
{\bf AMS Subject Classification:} 
76D05  35B10, 74F10, 76D03, 35Q35.
\smallskip\par\noindent
{\bf Keywords:} Navier-Stokes equations, fluid-solid interaction, rotation, time-periodic solutions
\end{abstract}

%\tableofcontents  
\section{Introduction}
As is well known, one of the central topics of research in the field of fluid-solid interactions is the study of the oscillations of an elastic structure due to the action of a fluid in a time-periodic regime. In fact, this  kind of problems occurs in a wide range of fundamental applications at different scales, including aeroelasticity \cite{Conner}, towing of airborne or underwater bodies \cite{WI}, suspension bridges \cite{diana}, and physiological flows \cite{HH}. We refer to the comprehensive monograph \cite{Bev} and the references therein for more details. Among the many lines of investigation, and especially for its relevance to the integrity of the structure, the one concerning the phenomenon of resonance is particularly significant. This phenomenon occurs when the frequency of the incident fluid flow becomes increasingly close to one, or a multiple, of the natural frequencies of the structure. In this case, the amplitude of the oscillation of the latter can increase dramatically and lead, in some cases, to serious damage or even collapse of the structure.
\par 
In recent years, we have started a systematic study of this phenomenon from a rigorous mathematical point of view, aimed, in particular, at understanding  wind-induced resonance in suspension bridges \cite{BeBoGaGaPe,BoGa,BoGaGa1,BoGaGa2,BoGaGa3,bonheure2024longtime,Pa}. The model we used is the classic one proposed by engineers, see e.g. \cite{Bev}, where the structure, i.e. the cross-section of the bridge, is a rigid body subject to a linear elastic restoring force exerted by the suspenders, while the wind motion is described by the Navier-Stokes equations driven by a uniform flow at spatial infinity, characterized by a constant or time-periodic velocity field. Our analysis was performed  in different situations, namely, when the oscillation of the structure is due to the time periodic flow generated either by spontaneous Hopf bifurcation from a 
steady-state \cite{BoGaGa3} or by a time-periodic uniform flow at large spatial distances \cite{BoGa}. In both cases, we found out that the coupled  fluid-structure system of equations possesses  time-periodic ($T$-periodic) solutions  corresponding to data of {\em arbitrarily given} frequency, and therefore also coinciding with the natural ones of the structure. For this reason we conclude that, at least within these models, resonance cannot occur.  
\par
It must be remarked, however, that the results in \cite{BoGa,BoGaGa3}, although rather interesting on the one hand, are, on the other hand, somewhat still unsatisfactory. In fact, they provide no control over the amplitude of the structure's oscillations, which, in principle, could therefore become unlimitedly large in the vicinity of the natural frequency. Furthermore, in the model described above, the bridge is not allowed to rotate around its longitudinal axis, while rotation, which adds an extra degree of freedom to the structure, could actually be an important factor in causing resonance. 
\par
The primary objective of this paper is to investigate the phenomenon of resonance by taking into account  these two aspects. Specifically, we consider a more realistic model than in \cite{BoGa}, namely the body is now allowed to rotate along a given direction,  subject to the restoring torque exerted by the suspenders. To address the best case scenario that could favor resonance, we assume that there is {\em no damping mechanism}  associated with the torque and that the {\em only} source of dissipation for the coupled fluid-structure system comes from the viscosity of the fluid. We further  suppose that the motion of the system is driven by a uniform, $T$-periodic velocity field, $\bfV$, of the fluid at large distance (spatial infinity) from the body. Under these circumstances we then show that, for any $T>0$, any (sufficiently regular) $\bfV$ and any values of the physical parameters entering the problem,  the corresponding governing set of equations admits at least one $T$-periodic weak solution; we refer to Theorem~\ref{main:th} for the precise statement. Furthermore, improving on previous work, we are able to obtain {\em uniform and pointwise estimates} of the amplitude of the linear and angular oscillations of the body in terms of data showing, in particular, that such oscillations are entirely controlled by the magnitude of $\bfV$ and are {\em independent} of how close the frequency $2\pi/T$ may be to one of the natural frequencies of the structure; see  \eqref{bound_weaksol}, \eqref{bound_point}. Our results thus imply that the resonance phenomenon is absent and that the viscosity of the fluid alone, no matter how small, is sufficient to prevent it from occurring.   
\par
The method we employ is a non-trivial adaptation of the one introduced in \cite{BoGa} to the more general situation considered here, in combination with the new ideas needed to derive the aforementioned pointwise control of the linear and angular oscillations of the body.  In this regard, it should be emphasized that, in the case in question, there are further aspects, due precisely to the additional angular degree of freedom, which render the problem even more interesting. Actually, in order to make the region of flow independent of time {\em and} known, we rewrite --as customary-- the governing equations in a frame attached to the body; see \eqref{01-translation+rotation_or}.   In doing so, however, the direction of $\bfV$ becomes a {\em further} unknown. Moreover,  the classical procedure of lifting $\bfV$ by shifting the flow velocity with a compactly supported extension is no longer feasible; see the comments at the beginning of Section \ref{sec:body-fixed}. 
\par
Our approach is developed in several steps. Therefore, in order not to obscure the guiding idea, we provide, for the reader's convenience, a detailed description of them sequentially, just after the statement of our main result, given in Theorem \ref{main:th}. There, we also outline the corresponding main challenges.
\par
The plan of the paper is as follows. In Section~\ref{sec:form} we begin to give the mathematical formulation of the problem along with its reformulation in a body-fixed frame, see Section~\ref{sec:body-fixed}. Successively, in Section~\ref{Sec:main}, after introducing some relevant function classes, we provide the definition of $T$-periodic weak solution and state our main result in Theorem~\ref{main:th}. The next Section~\ref{sec:IVBP} is dedicated to the proof of well-posedness of the initial-boundary value problem for the suitably mollified governing equations in the bounded domain $\Omega_R:=\Omega\cap \{|x|<R\}$,  where $R>0$ is arbitrary and sufficiently large; see Lemma~\ref{Lemma_glob_ex}. In Section~\ref{Sec:Haraux_trick} we show the crucial result that the total energy, kinetic and potential, associated to the solutions found in the previous sections is exponentially decreasing, at least in the time interval $[0,T]$. This will allow us 
to show existence of $T$-periodic solutions in $\Omega_R$ in Section~\ref{Sec:VeNa}, and then furnish, in Section \ref{Sec8}, the full proof of the main result Theorem~\ref{main:th}. It should be finally observed that the reformulation in the body-fixed frame is only necessary if the body possesses no rotational symmetry around its axis of rotation. In case it does, proofs can be fairly simplified, as explained in the final Section~\ref{sec:symm}.

\section{Formulation of the problem and main results}\label{sec:form}
Consider a rigid body, $\mathscr B$, occupying the closure of the bounded domain $\Omega_0\subset\mathbb{R}^3$, completely immersed in a Navier-Stokes liquid, $\mathscr L$, that fills the entire three-dimensional space outside $\mathscr B$, see Figure 1. 
\begin{figure}[ht!]
{\includegraphics[width=3.75in,height=2.8in,keepaspectratio]{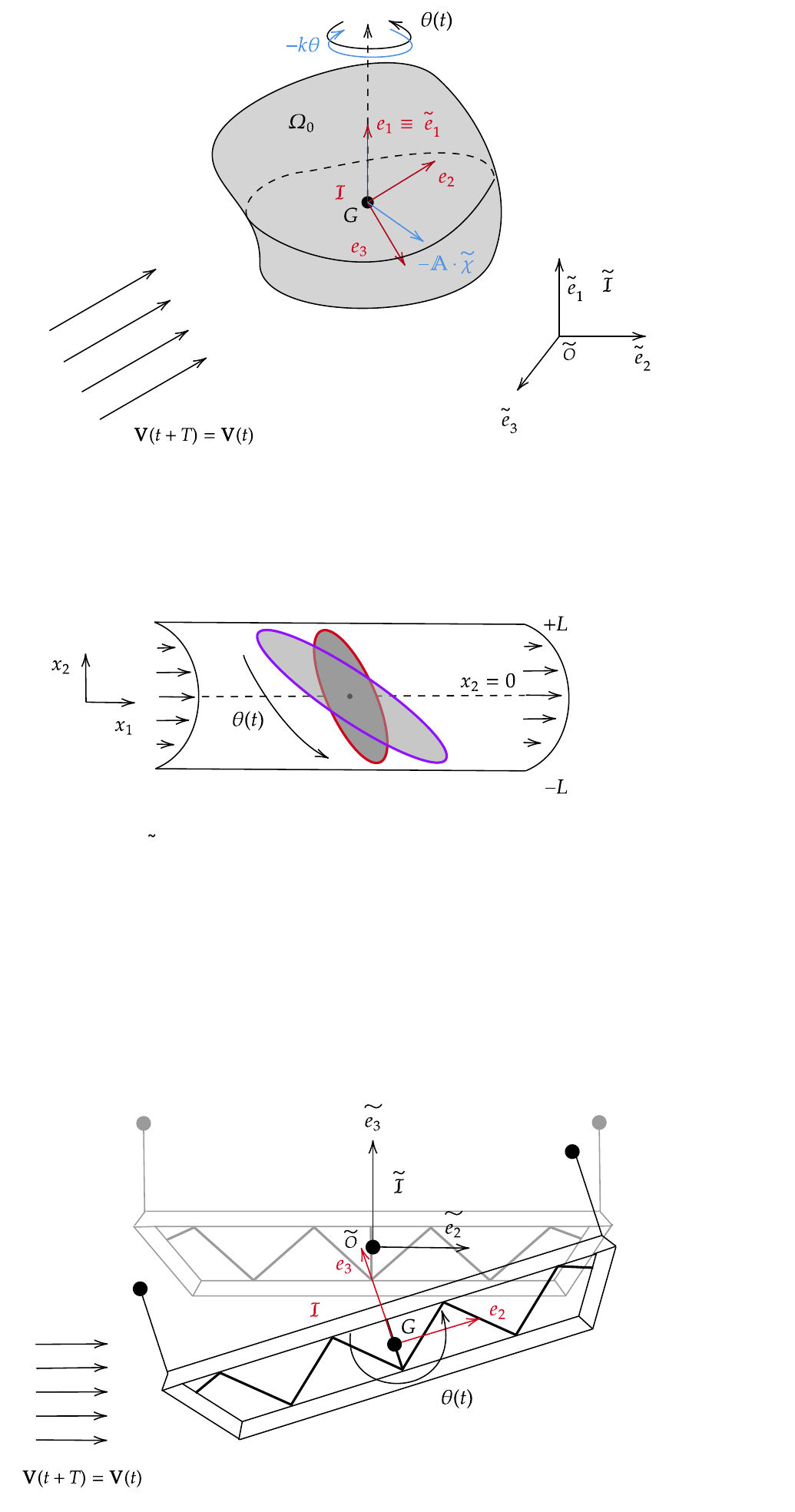}}
{\includegraphics[width=3.75in,height=2.8in,keepaspectratio]{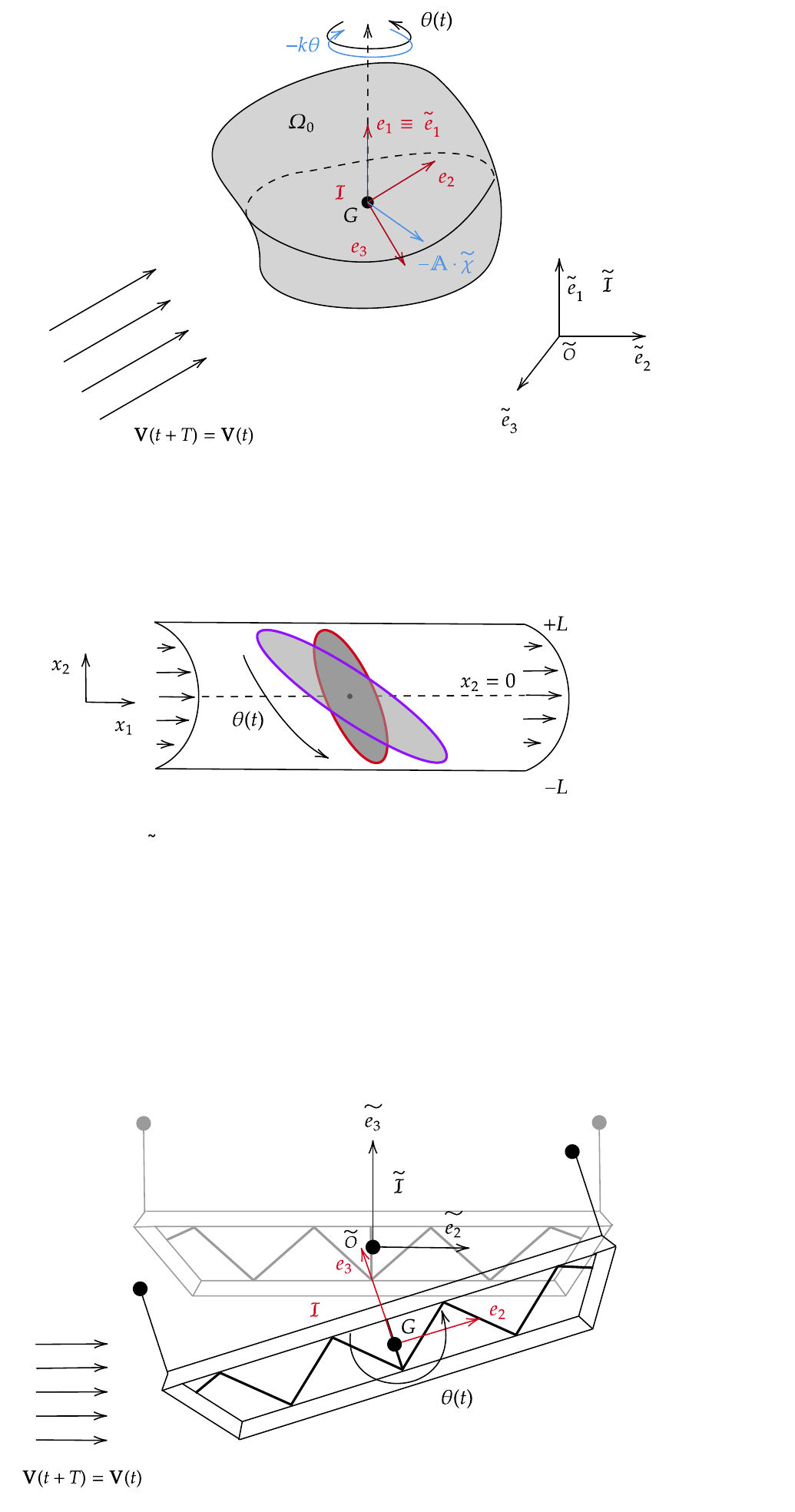}}
\caption{\small  Left: a generic representation of the system body-liquid. Right: a 2D-schematic view of the cross-section of the deck of a bridge subjected to a periodic wind-flow.} 
\end{figure}
We suppose that, with respect to an inertial frame $\tilde\cali:=\{\tilde O,\tilde\bfe_1,\tilde\bfe_2,\tilde\bfe_3\}$, $\mathscr B$  moves subject to the following force and torque:\begin{itemize} \item[(a)] A linear, possibly anisotropic, restoring force $\bfR=-{\mathbb A}\cdot{\tilde\bfchi}$ with ${\mathbb A}$ symmetric, positive definite matrix ({\em stiffness matrix}) and ${\tilde\bfchi}:=\tilde\bfy_G-\tilde\bfy_{\tilde O}$ displacement of the center of mass $G$ of $\mathscr B$ with respect to  $\tilde O$; 
\item[(b)] A restoring torque $-{k}\theta\tilde\bfe_1$ with ${k}>0$ (\textit{stiffness constant}) and $\theta$ angle counted with respect to the direction $\tilde\bfe_2$. 
\end{itemize}
 We will assume that $\mathscr B$ is free to rotate only around the (fixed) direction  $\tilde{\bfe}_1$. 
\smallskip\par 
We further suppose that the motion of the coupled system $\mathscr S:=\mathscr B-\mathscr L$ is driven by a uniform, time-periodic flow at ``large" distances from $\mathscr B$, characterized by the velocity field $-V(t)\tilde{\bfb}_\alpha$, where $V$ is a prescribed time-periodic  real function of period $T>0$ and  $\tilde{\bfb}_\alpha=\cos\alpha\, \tilde\bfe_1 +\sin\alpha\, \tilde\bfb$, where $\tilde\bfb = (0,\tilde b_2,\tilde b_3)$ has length $1$. The (given) angle $\alpha$ can be seen as an angle of attack. Thus, when $\alpha=0$, the far-field flow is aligned with the axis of rotation of the body, whereas at $90^\circ$, it  is totally transverse. Under these conditions, 
the equations governing the motion of $\mathscr S$ in the frame $\tilde\cali$ can then be written, in dimensionless form, as follows (see, e.g., \cite[Section 1]{Gah}) 
\be\left\{\ba{l}\medskip\left.\ba{r}\medskip
\partial_t\tilde{\bfw}+\lambda\tilde{\bfw}\cdot\nabla\tilde{\bfw}=\Delta\tilde{\bfw}-\nabla {\tilde{q}}\\
\Div\tilde{\bfw}=0\ea\right\}\ \ \mbox{in $\BCup{t\in\real}{}\left(\tilde\cald_{\tilde y}(t)\times\{t\}\right)$}\,,\\ \medskip
\ \ \tilde{\bfw}(\tilde\bfy,t)={\tilde\bfeta}(t)+\omega(t)\tilde\bfe_1\times(\tilde{\bfy}-\tilde{\bfy}_G(t))\,, \ \mbox{at $\BCup{t\in\real}{}\left(\partial\tilde\cald_{\tilde y}(t)\times\{t\}\right)$}\,;\ \\ \medskip
\ \ \Lim{|\tilde{\bfy}|\to\infty}\tilde{\bfw}(\tilde\bfy,t)=-\bfV(t):=- V(t)\tilde\bfb_\alpha\,,\ t\in\real\,,\\
\medskip\left.\ba{r}\medskip
\dot{{\tilde\bfeta}}+{\mathbb A}\cdot{\tilde\bfchi}+\varpi\Int{\partial\tilde\cald_{\tilde y}(t)}{} \mathbb{S}(\tilde{\bfw},{\tilde q})\cdot\tilde\bfn=\0
\\ \medskip
\dot{{\tilde\bfchi}}={{\tilde\bfeta}}\\ \medskip
\dot{\omega}+k\theta+\tau\tilde\bfe_1\cdot\Int{\partial\tilde\cald_{\tilde y}(t)}{}(\tilde{\bfy}-\tilde\bfy_G)\times \mathbb{S}(\tilde{\bfw},{\tilde q})\cdot\tilde\bfn=0\, \\
\medskip
\dot{\theta}={\omega}
\ea\right\}\ \ \mbox{in $\real$\,.}
\ea\right.
\eeq{01}
Here $\tilde\cald_{\tilde y}(t)=\tilde\cald_{\tilde y}({\tilde\bfchi}(t),\theta(t))$, the domain occupied by $\mathscr L$ at time $t$, depends on the position of the center of mass of the body and its orientation, i.e. 
$$\cald_{\tilde y}(t)= \left(\tilde\bfy_{\tilde O} + {\tilde\bfchi}(t) + \mathbb{Q}(\theta(t))\Omega_0\right)^\complement,$$
where $\mathbb Q(t)=\mathbb{Q}(\theta(t))$,  $t\in \mathbb{R}$, is the one-parameter
family of rotations around  $\tilde\bfe_1$ defined by 
\be
\mathbb{Q}(\theta(t)):=\left(\ba{ccc}\medskip 1 &0&0\\ \medskip
0&\cos\theta(t)&-\sin\theta(t)\\
0&\sin\theta(t)&\cos\theta(t)\ea\right)\,.
\eeq{Q}   
Moreover,
 $\tilde{\bfw}$ and ${\tilde q}$ are velocity and pressure fields of the liquid, and
$$
 \mathbb{S}(\bfz,\psi):=2\,\mathbb D(\bfz)-\psi\,\mathbb I\,,\ \ \ \mathbb D(\bfz):=\frac12\big(\nabla\bfz+(\nabla\bfz)^\top\big)\,,
$$
with $\mathbb I$ identity matrix, is the (dimensionless) Cauchy stress tensor, and  $\tilde\bfn$ the unit outer normal at $\partial\tilde\cald$ directed toward $\mathscr{B}$.
Notice that, with the above  non-dimensionalization, we have 
\be
\sup_{t\in \real}|V(t)|=1\,.
\eeq{SV}
Finally,  
${\lambda},\mathbb A$, $k$, $\varpi$ and $\tau$ are non-dimensional physical quantities (respectively the relevant Reynolds number, the non-dimensional linear and angular stiffness parameters, and the non-dimensional ratios of densities). 
We will count the angle $\theta$ \emph{with its multiplicity} so that, in particular, the restoring torque takes into account the number of full turns made by the body. 

Our ultimate goal is to show that, for any given $\lambda, \mathbb A, k,\varpi, \tau$ and $T>0$, and any  (sufficiently smooth) $V$, problem \eqref{01} has at least one,  suitably defined,  time-periodic of period $T$ (that we will refer to as $T$-periodic) weak solution $(\tilde{\bfw},{\tilde\bfchi},\theta)$. We will achieve this goal in its full generality, namely, without restriction on the size of the data, and, even more importantly, for \emph{arbitrary} choice of the period $T$. Furthermore, we shall prove, in particular, a \emph{pointwise control of the amplitude} of the oscillations of $\mathscr B$ in terms of the data. Thus, a significant consequence of our result is that the viscosity of the liquid, {\em no matter how small},  is able to damp any possible source of resonance which, therefore, cannot take place within this model. 
\par As a last remark, we observe that our method of proof drastically simplifies in the particular case where $\mathscr B_0$ is invariant with respect to the group of rotations around the axis $\tilde\bfe_1$. This simplified approach will be presented in Section \ref{sec:symm}.

\subsection{Body-fixed frame formulation}\label{sec:body-fixed}

Our next objective is to reformulate problem \eqref{01} into an equivalent problem by a  two-step procedure. To simplify the notations, if  $\bfa\equiv(a_1,a_2,a_3)\in\mathbb{R}^3$, we set $\bfa^\perp:=\tilde\bfe_1\times \bfa=(0,-a_3,a_2)$.  It is important to highlight that the usual method, used for instance in \cite{BoGa}, to formulate the problem with a velocity field vanishing at infinity by constructing a suitable lifting $\bfU$ of $\bfV$ is not appropriate to reformulate \eqref{01} due to the angular degree of freedom. Indeed, this method generally involves searching for a velocity field $\tilde{\bfv}$ of the form 
$$
\tilde{\bfv}:=\tilde{\bfw}-V\tilde{\bfb}_\alpha+\bfU\,,
$$
where $\bfU$ is a compactly supported function equal to $V\tilde{\bfb}_\alpha$ near the obstacle $\Omega_0$, and $\tilde{\bfw}$ is the new unknown velocity field. However, when writing the problem for such a $\tilde{\bfw}$ in a body-fixed frame,  the fluid equation contain  terms that couple the angular velocity $\dot{\theta}$ and the far-field velocity $V\tilde{\bfb}_\alpha$, which do not seem tractable to us. 
 
As an alternative, we rather attach first the frame origin at the center of  mass $G$ of $\mathscr B$, scale the velocity field $\tilde{\bfw}$ with the uniform flow $V(t)\tilde\bfb_\alpha$, and we redefine the pressure field accordingly. Precisely, we set
	\be
		\tilde{\bfx}:=\tilde{\bfy}-\tilde\bfy_G(t)\,, \quad \tilde{\bfu}:=\tilde{\bfw}- V(t)\tilde\bfb_\alpha\,,\quad \tilde{p}:={\tilde q}-\lambda\dot{{ V}}(t)\tilde\bfb_\alpha\cdot\tilde{\bfx}\,. 
	\eeq{change}
In this system of coordinates, the fluid domain $\tilde{\cald}_{\tilde x}(t)=\tilde{\cald}_{\tilde x}(\theta(t))$ does only depend on the orientation of the solid, i.e.
$$\tilde{\cald}_{\tilde x}(t)= \left(\mathbb{Q}(\theta(t))\Omega_0\right)^\complement.$$
This yields the new couple system in the frame $\tilde\cali_G:=\{G,\tilde\bfe_1,\tilde\bfe_2,\tilde\bfe_3\}$
\be\left\{\ba{l}\medskip\left.\ba{r}\medskip
\partial_t\tilde{\bfu}+\lambda(\tilde{\bfu}-\tilde\bfgamma(t))\cdot\nabla\tilde{\bfu}=\Div \mathbb{S}(\tilde{\bfu},\tilde{p})\\
\Div\tilde{\bfu}=0\ea\right\}\ \ \mbox{in $\BCup{t\in\real}{}\left(\tilde{\cald}_{\tilde x}(t)\times\{t\}\right)$}\,,\\ \medskip
\ \ \tilde{\bfu}(\tilde{\bfx},t)=\tilde\bfgamma(t)+\omega(t)\tilde{\bfx}^\perp\,, \ \mbox{at $\BCup{t\in\real}{}\left(\partial\tilde{\cald}_{\tilde x}(t)\times\{t\}\right)$}\,;\ \\ \medskip
\ \ \Lim{|\tilde{\bfx}|\to\infty}\tilde{\bfu}(\tilde{\bfx},t)={\bf{0}}\,, \ \ t\in\real\,,\\
\medskip\left.\ba{r}\medskip
\dot{{\tilde\bfgamma}}+{\mathbb A}\cdot{{\tilde\bfchi}}+\varpi\Int{\partial\tilde{\cald}_{\tilde x}(t)}{} \mathbb{S}(\tilde{\bfu},\tilde{p})\cdot\tilde\bfn=c\,\dot{V}(t)\tilde\bfb_\alpha
\\ \medskip
\dot{{\tilde\bfchi}}={\tilde\bfgamma-V(t)\tilde\bfb_\alpha}\\ \medskip
\dot{\omega}+k\theta+\tau\,\bfe_1\cdot\Int{\partial\tilde{\cald}_{\tilde x}(t)}{}\tilde{\bfx}\times \mathbb{S}(\tilde{\bfu},\tilde{ p})\cdot\tilde\bfn=d\,\dot{V}(t)\, \\
\medskip
\dot{\theta}={\omega}
\ea\right\}\ \ \mbox{in $\real$\,,}
\ea\right.
\eeq{01+++only-tranlsation}
where 
\be\ba{ll}
\medskip
c=c(\varpi,\mathscr B):=(1-{ \lambda}\varpi\,{\rm vol}\,(\mathscr B)),\ 
 d=d(\alpha,\tau,\mathscr B):=\lambda\tau\sin \alpha\,  \tilde\bfb\cdot\int_{\Omega_0} (-x_3\bfe_2+x_2\bfe_3 )   \,. 
\ea
\eeq{1.6666}
Second, as customary in this type of investigations, we rewrite  \eqref{01+++only-tranlsation} in a body-fixed frame $\cali$, so that the domain occupied by the liquid becomes time-independent.  
To this end, we choose $\cali:=\{G,\bfe_1,\bfe_2,\bfe_3\}$ where 
$\bfe_i=\mathbb Q^\top(t)\cdot\tilde\bfe_i$, $i=1,2,3$, so that we use the notation 
$$\bfa(t):=\mathbb Q^\top(t)\cdot \tilde{\bfa}(t)$$
for vectors.
We then also introduce new variables
\be
\bfx:=\mathbb Q^\top(t)\cdot \tilde{\bfx} \,,
\eeq{change_rotation}
that yield the steady fluid domain
$$
\Omega:=\left\{\bfx\in\real^3:\, \bfx=\mathbb Q^\top(t)\cdot\tilde{\bfx}\,,\ \tilde{\bfx}\in  \tilde\cald_{\tilde x}(t), \, t\in\real\right\} =\mathbb R^3\setminus\Omega_0\,.
$$
This change of variables induces the transformed velocity and presure fields
\be
\ba{l}\medskip
(\bfu(\bfx,t),p(\bfx,t)):=(\mathbb Q^\top(t)\cdot\tilde{\bfu}(\mathbb Q(t)\cdot\bfx,t),\tilde{p}(\mathbb Q(t)\cdot\bfx,t))\,,\\ 
\ea
\eeq{change_var}
whereas the associated Cauchy stress tensor now reads
$$\mathbb{T}(\bfu,p):=\mathbb{Q}^\top(t)\cdot \mathbb{S}(\mathbb{Q}(t)\cdot \bfu,p)\cdot\mathbb{Q}(t).$$
Then, see for instance \cite[\S\S 1 and 2.1]{Gah} for the detailed computation, $(\bfu,p,\bfxi,\bfdelta,\omega,\theta)$ solve the following set of equations \smallskip\par\noindent
\be\left\{\ba{l}\medskip\left.\ba{r}\medskip
\partial_t\bfu+\lambda(\bfu-\bsfV)\cdot\nabla\bfu+\dot{\theta}\bfu^\perp= \Div\mathbb T(\bfu,p)\\
\Div\bfu=0\ea\right\}\ \ \mbox{in $\Omega\times\real$}\,,\\ \medskip
\ \ \bfu({\bfx},t)=\bsfV({\bfx},t) \mbox{ on $\partial\Omega_0\times\real$}\,;\ \\ \medskip
\ \ \Lim{|\bfx|\to\infty}\bfu(\bfx,t)={\bf0}\,,\ t\in\real\,,
\\ \hspace{2mm}\bfu(\bfx,0)=\bfu_0\ \mbox{in $\Omega$}
\\
\medskip
\left.\ba{r}\medskip
\dot{\bfxi}+\dot\theta\bfxi^\perp+\mathbb B(t)\cdot\bfdelta+\varpi\Int{\partial\Omega}{}\mathbb T(\bfu,p)\cdot\bfn= c\, \dot V(t) \bfb_\alpha(t)
\\ \medskip
\dot{\bfdelta}+\dot\theta\bfdelta^\perp={\bfxi-V(t)\bfb_\alpha(t)}\\ \medskip
\dot{\omega}+k\theta+\tau\,\bfe_1\cdot\Int{\partial\Omega}{}\bfx\times\mathbb T(\bfu,{ p})\cdot\bfn=d\, \dot V(t)\, \\
\medskip
\dot{\theta}={\omega}
\ea\right\}\ \ \mbox{in $\real$\,,}
\ea\right.
\eeq{01-translation+rotation_or}
where 
\be\bsfV({\bfx},t):=\bfxi(t)+\omega(t){\bfx}^\perp
\eeq{boldV}
and 
\be
\mathbb B(t)=\mathbb{B}(\theta(t)):=\mathbb Q^\top(\theta(t))\cdot\mathbb A\cdot\mathbb Q(\theta(t))\,.
\eeq{A}
Notice that, for any $t\in\real$, this matrix is  positive definite. In particular, there exist $\rho_1,{\rho_2}>0$ depending only on the eigenvalues of $\mathbb{A}$ such that 
\be
\rho_1 |\bfdelta|^2\le \bfdelta\cdot \mathbb{B}\cdot \bfdelta\le {\rho_2}\,|\bfdelta|^2\,, \ \ \forall\, \bfdelta\in\real^3.
\eeq{B_bound}

\begin{remark}
For future reference, we emphasize that, in view of their definition in \eqref{A}, even though the matrix $\mathbb{B}$ and the vector ${\bfb}_\alpha$ are now both time dependent and their direction is an unknown of the system, they are known functions of $\theta$.
\end{remark}

\subsection{$T$-periodic weak solutions and statement of the main result}
\label{Sec:main}
We begin by recalling  some standard notations. For any open set $\Omega\subseteq\mathbb{R}^3$, $q\in[1,\infty]$ and $m\in\mathbb{N}$, $L^q(\Omega)$ and $W^{m,2}(\Omega)$ denote respectively the usual Lebesgue and Sobolev spaces. For any Banach space $X$, we use $\|\cdot\|_X$ to indicate its norm when a confusion is possible, while we denote by $L^q(0,T;X), W^{1,q}(0,T;X)$ and $C^m(0,T;X)$ the standard Bochner spaces of functions defined from $(0,T)$ to $X$. We use $H^1(0,T;X)$ to denote $W^{1,2}(0,T;X)$.
 \par
In order to give the definition of $T$-periodic weak solution to \eqref{01-translation+rotation_or}, we need to introduce some further function spaces commonly used in the context of fluid-structure interaction. Specifically, we set
 $$
 \begin{aligned}
 	\mathcal{K}=\mathcal{K}(\mathbb{R}^3)&:=\{\bfphi\in C^\infty_0(\mathbb{R}^3)\,:\, \exists \,\hat{\bfrho}\in\mathbb{R}^3, \hat{\alpha}\in\mathbb{R}\,\,\text{s.t.}\,\, \bfphi(\bfx)=\hat{\bfrho}+\hat{\alpha}\bfx^\perp \,\,\text{for $\bfx$ in a neighborhood of $\Omega_0$}\}\,,
 	\\ \mathcal{C}=\mathcal{C}(\mathbb{R}^3)&:=\{\bfphi\in\mathcal{K}(\mathbb{R}^3)\,:\,\text{div}\bfphi=0\,\,\,\text{in}\,\,\,\mathbb{R}^3\}\,,
 	 \end{aligned}
 	$$
 	and define in $\mathcal{K}$ the following scalar product
 \be
 \langle\bfphi_1,\bfphi_2\rangle_{\mathbb{R}^3}:=(\bfphi_1,\bfphi_2)_{\Omega}+{\frac1\varpi}\hat{\bfrho}_1\cdot \hat{\bfrho}_2+{\frac1\tau}\hat{\alpha}_1\hat{\alpha}_2\,,	\qquad \bfphi_1,\bfphi_2\in \mathcal{K}\,,
 \eeq{sp}
where $(\cdot,\cdot)_{A}$, for any open $A\subseteq \mathbb{R}^3$,  is the usual $L^2(A)$-scalar product (we may omit the subscript $``A"$ when no confusion arises).  We also define the spaces
 $$
 \begin{aligned}
 		\mathcal{L}^2(\mathbb{R}^3)&:=\{\bfphi\in L^2(\mathbb{R}^3)\,:\,\bfphi|_{\Omega_0}=\hat{\bfrho}+\hat{\alpha}\bfx^\perp \text{ for some $\hat{\bfrho}\in\mathbb{R}^3,\hat{\alpha}\in\mathbb{R}$}\}\,,
 \\\mathcal{H}(\mathbb{R}^3)&:=\{\text{completion of $\mathcal{K}(\mathbb{R}^3)$ in the norm induced by \eqref{sp}}\}\,,
 	\\\mathcal{D}^{1,2}=\mathcal{D}^{1,2}(\mathbb{R}^3)&:=\{\text{completion of $\mathcal{C}(\mathbb{R}^3)$ in the norm $\|\mathbb{D}(\cdot)\|_2$} \}\,.
 \end{aligned}
 $$
 Equipped with the scalar product 
 $
 (\mathbb{D}(\cdot),\mathbb{D}(\cdot))
$,
the space $\mathcal{D}^{1,2}$ is an Hilbert space.
\medbreak
We also need to define the class $\mathcal{C}_{ \sharp}(\mathbb{R}^3)$ of test functions constituted by $T$-periodic $\bfphi\in C^1(\mathbb{R}^3 \times \mathbb{R}\,;\mathbb{R}^3)$ restricted to $[0,T]$ and satisfying:
\begin{enumerate}[(a)] 
	\item $\text{div}\,\bfphi(\bfx,t)=0$ for $(\bfx,t)\in \mathbb{R}^3\times\mathbb{R}$ ;
	\item $\bfphi(\bfx,t)=\hat{\bfrho}(t)+\hat{\alpha}(t)\bfx^\perp$ for some $\hat{\bfrho}\in C^1(\mathbb{R};\mathbb{R}^3)$, $\hat{\alpha}\in C^1(\mathbb{R};\mathbb{R})$ and $\bfx$ in a neighborhood of $\Omega_0$ and $t\in \mathbb{R}$ .
	\item supp$_{\bfx} \bfphi(\bfx,t)\subset \mathbb{R}^3$ for all $t\in\mathbb{R}$ .
	\item $\bfphi(\bfx,t+T)=\bfphi(\bfx,t)$ for all $(\bfx,t)\in \mathbb{R}^3\times \mathbb{R}$ .
\end{enumerate}
Thus, formally multiplying \eqref{01-translation+rotation_or}$_1$ by $\bfphi\in\mathcal{C}_{ \sharp}(\mathbb{R}^3)$, integrating by parts over $\Omega\times[0,T]$ and using \eqref{01-translation+rotation_or}$_{2-8}$, we obtain
$$
\begin{aligned}
\langle\bfu(T),\bfphi(T)\rangle-\langle\bfu(0),\bfphi(0)\rangle=&-\int_0^T\bigg[-\langle\bfu,\bfphi_t\rangle+\lambda((\bfu-\bsfV)\cdot\nabla\bfu+\dot{\theta}\bfu^{\perp},\bfphi)+{\frac1\varpi}\dot{\theta}\bfxi^{\perp}\cdot\hat{\bfrho}\\&
+2(\mathbb{D}(\bfu),\mathbb{D}(\bfphi))
+{\frac1\varpi}\hat{\bfrho}\cdot\mathbb{B}\cdot\bfdelta+{\frac{\hat{\alpha}k\theta}\tau}-{\frac{c\dot{V}}\varpi}\,\hat{\bfrho}\cdot {\bfb}_\alpha-{\frac{\hat{\alpha}d\dot{{ V}}}\tau}\bigg]\,{\rm d}t\,.
\end{aligned}
$$
By \eqref{01-translation+rotation_or}$_7$-\eqref{01-translation+rotation_or}$_9$, we also deduce 
$$
\bfdelta(T)-\bfdelta(0)=\int_0^T(\bfxi(t)-V(t){\bfb}_\alpha(t)-\dot{\theta}(t)\bfdelta^\perp(t))\,{\rm d}t$$ 
and
$$\theta(T)-\theta(0)=\int^T_0\omega(t)\,{\rm d}t\,.
$$
Thus, if $(\bfu,\bfxi,\bfdelta,\omega,\theta)$ is a (sufficiently smooth) $T$-periodic solution to \eqref{01-translation+rotation_or}, it satisfies 
\be\ba{rl}\smallskip
&\Int0T\bigg[-\langle\bfu,\bfphi_t\rangle+\lambda((\bfu-\bsfV)\cdot\nabla\bfu+\dot{\theta}\bfu^{\perp},\bfphi)+{\frac{\dot{\theta}}\varpi}\bfxi^{\perp}\cdot\hat{\bfrho}\\\smallskip&
	\quad\qquad +2(\mathbb{D}(\bfu),\mathbb{D}(\bfphi))
	+{\frac1\varpi}\hat{\bfrho}\cdot\mathbb{B}\cdot\bfdelta+{\frac{\hat{\alpha}k\theta}\tau}-{\frac{c\dot{V}}\varpi}\,\hat{\bfrho}\cdot {\bfb}_\alpha-{\frac{\hat{\alpha}d\dot{{ V}}}\tau}\bigg]\,{\rm d}t=0\,,\\ 
&\dot{\bfdelta}+\dot\theta\bfdelta^\perp={\bfxi-V(t)\bfb_\alpha(t)}\,,\quad\Int0T(\bfxi(t)-V(t){\bfb}_\alpha(t)-\dot{\theta}(t)\bfdelta^\perp(t))\,{\rm d}t =0\,,\\ 
&\dot{\theta}=\omega\,,\quad \Int0T\omega(t)\,{\rm d}t=0\,,
\end{array}
\eeq{periodic_weak}
for any test function $\bfphi\in\mathcal{C}_{\sharp}(\mathbb{R}^3)$.
This computation motivates the definition of weak solution.
\begin{definition}\label{definition_weak}
	We say that $(\bfu,\bfxi,\bfdelta,\omega,\theta)$ is a {\textit{$T$-periodic weak solution}} to \eqref{01-translation+rotation_or} if 
	\begin{enumerate}[(i)] 
	\item $\bfxi\in L^2(0,T;\mathbb{R}^3)$, $\bfdelta\in H^1(0,T;\mathbb{R}^3)$, $\omega\in L^{2}(0,T;\mathbb{R})$, $\theta\in H^1(0,T;\mathbb{R})$\, ;
	\item 
$\bfu\in L^2(0,T;\mathcal{D}^{1,2}(\mathbb{R}^3))$, with $\bfu(\bfx,t)|_{\partial \Omega}=\bfxi(t)+{\omega}(t)\bfx^\perp$ for almost every $t\in [0,T]$\,;
		\item $(\bfu,\bfxi,\bfdelta,\omega,\theta)$ satisfies \eqref{periodic_weak} for any test function $\bfphi\in\mathcal{C}_{\sharp}(\mathbb{R}^3)$\,.
	\end{enumerate}
\end{definition}
\par
We are now in a position to state our main result.  
\begin{theorem}\label{main:th}
	Let $V\in H^1(0,T;\mathbb{R})$ be $T$-periodic for some $T>0$. Then, there is at least one corresponding $T$-periodic weak solution to \eqref{01-translation+rotation_or}. Furthermore, there exists $C>0$ depending only on the physical and geometrical parameters of the problem, such that
	\be
\begin{array}{ll}\smallskip
\|\bfxi\|_{L^2}^2+\|\omega\|_{L^2}^2 + \Int0T\|\nabla \bfu(t)\|_{2}^2\,{\rm d}t\le C\,\mathcal{V}\,,
\\ \|\bfdelta\|_{L^2}^2+\|\theta\|_{L^2}^2 \le C\mathcal{V}\,\left(T^2+{\mathcal{V}}+\Frac{\mathcal{V}^2}{T^{1/2}}+\Frac{\mathcal{V}^3}T\right)\,,
\end{array}
\eeq{bound_weaksol}
and 
\be
\|\bfdelta\|_{L^\infty}+\|\theta\|_{L^\infty} \le C\mathcal{V}^{1/2} \left(T^{1/2}+T\mathcal{V}^{1/2}+\mathcal{V}+\frac{\mathcal{V}^{3/2}}{T^{1/4}}+\frac{\mathcal{V}^{1/2}+\mathcal{V}^{2}}{T^{1/2}}+\frac{\mathcal{V}}{T^{3/4}}+\frac{\mathcal{V}^{3/2}}T\right)\,,
\eeq{bound_point}
where
$$
\mathcal{V}:=\|V\|_{H^1}^2=\int^{T}_0\left(|V(t)|^2+|\dot{V}(t)|^2\right)\,{\rm d}t\,.
$$
\end{theorem}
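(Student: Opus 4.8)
The plan is to obtain the desired $T$-periodic solution as a limit — first in time, via a fixed-point argument for a Poincar\'e-type map, then in space, through an invading-domains procedure — of solutions to a regularized version of \eqref{01-translation+rotation_or}. Concretely, I would first truncate the exterior domain to $\Omega_R:=\Omega\cap\{|\bfx|<R\}$, prescribing $\bfu=\zero$ on $\{|\bfx|=R\}$, and mollify the convective term (replacing $\bfu-\bsfV$ by a smoothing $J_\delta(\bfu-\bsfV)$) so that the nonlinearity becomes subcritical. A Galerkin scheme in the Hilbert space $\mathcal{H}(\Omega_R)$ — whose inner product \eqref{sp} already encodes the rigid-motion constraint on $\partial\Omega_0$ together with the weights $1/\varpi$ and $1/\tau$ — combined with the natural energy estimate and the hypothesis $V\in H^1(0,T)$, then yields global existence and uniqueness of a strong solution to the corresponding initial-boundary value problem for every initial datum; this is the content of Lemma~\ref{Lemma_glob_ex}.

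\emph{A Haraux-type a priori bound (the crux).} The heart of the argument is an energy estimate on $[0,T]$ that is uniform in $R$, in the mollification parameter $\delta$, and — decisively — in the initial datum. Testing the regularized fluid equation with $\bfu$ and the structure equations with $\bfxi/\varpi$ and $\omega/\tau$, the viscous contribution $2\|\mathbb{D}(\bfu)\|_2^2$ is the \emph{only} dissipative term, while the elastic terms merely reproduce the conserved potential energy $\tfrac1{2\varpi}\bfdelta\cdot\mathbb{B}\cdot\bfdelta+\tfrac{k}{2\tau}\theta^2$; thus the bare energy identity fails to close in the position variables $\bfdelta,\theta$. Using a trace and a Korn inequality one first observes that the boundary relation $\bfu|_{\partial\Omega_0}=\bfxi+\omega\bfx^\perp$ forces $|\bfxi|+|\omega|\le C\|\mathbb{D}(\bfu)\|_2$, so the fluid dissipation already controls the kinetic part of the total energy $E$. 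Then, following Haraux, I would add to $E$ the small correction $\varepsilon\big(\tfrac1\varpi\,\bfxi\cdot\bfdelta+\tfrac1\tau\,\omega\,\theta\big)$ and show that, for $\varepsilon$ small enough (depending only on the physical and geometrical data, through \eqref{B_bound}), the resulting functional $\mathcal{E}_\varepsilon$ satisfies $\tfrac12E\le\mathcal{E}_\varepsilon\le 2E$ and a differential inequality $\dot{\mathcal E}_\varepsilon+c_0\,\mathcal E_\varepsilon\le c_1\big(|V(t)|^2+|\dot V(t)|^2\big)$: the favourable term $-\varepsilon(\tfrac1\varpi\bfdelta\cdot\mathbb{B}\cdot\bfdelta+\tfrac{k}\tau\theta^2)$ produced by the correction controls $\varepsilon(|\bfdelta|^2+\theta^2)$, the competing terms being either absorbed by the fluid dissipation or bounded by $|V|^2+|\dot V|^2$, with the stress integrals eliminated in favour of $\bfu,\bfxi,\bfdelta,\omega,\theta$ via the momentum and torque balances in \eqref{01-translation+rotation_or}. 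Gronwall's lemma then bounds $E(t)$ on $[0,T]$ by $c\,\mathcal{V}$ plus an exponentially decaying multiple of $E(0)$. I expect this step to be the main obstacle: the extra rotational degree of freedom introduces the gyroscopic terms $\dot\theta\,\bfu^\perp$, $\dot\theta\,\bfxi^\perp$, $\dot\theta\,\bfdelta^\perp$ and the time-dependent, a priori unknown quantities $\bfb_\alpha(t)$ and $\mathbb{B}(t)=\mathbb{Q}^\top(\theta)\cdot\mathbb{A}\cdot\mathbb{Q}(\theta)$ into the energy balance, and one must verify that none of them — in particular none of the terms coming from $\dot{\mathbb{B}}$ — destroys the dissipative structure.

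\emph{Periodic solution in $\Omega_R$, then the limit $R\to\infty$.} By the previous step, the time-$T$ map $\bfz_0\mapsto\bfz(T;\bfz_0)$ of the regularized problem carries a sufficiently large ball into itself and is there continuous and compact (parabolic smoothing), so Schauder's theorem furnishes a fixed point, i.e. a $T$-periodic solution $(\bfu_R,\bfxi_R,\bfdelta_R,\omega_R,\theta_R)$ of the regularized problem in $\Omega_R$; since $T$-periodicity makes the boundary contributions in the energy identity cancel, this solution obeys the bound of the previous step \emph{without} the initial-energy term, hence $E_R(t)\le c\,\mathcal{V}$ uniformly in $R$ and $\delta$. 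I would then send $\delta\to0$ and $R\to\infty$ (extending $\bfu_R$ by zero), extracting weak limits in $L^2(0,T;\mathcal{D}^{1,2})$ and weak-$*$ limits for the finite-dimensional unknowns; the nonlinear term is handled by an Aubin--Lions argument on each bounded subdomain, using a uniform bound for $\partial_t\bfu_R$ in a space of negative order drawn from the equation together with a uniform tail estimate for $\int_{|\bfx|>\rho}|\bfu_R|^2\,{\rm d}\bfx$, while the identity $\bfu|_{\partial\Omega_0}=\bfxi+\omega\bfx^\perp$ survives by continuity of the trace. This produces a $T$-periodic weak solution in the sense of Definition~\ref{definition_weak}.

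\emph{Quantitative bounds.} Integrating the limiting energy inequality over $[0,T]$ and using $T$-periodicity (so that the endpoint contributions cancel) gives \eqref{bound_weaksol}$_1$ immediately. For \eqref{bound_weaksol}$_2$ one integrates the structural relations $\dot\bfdelta+\dot\theta\bfdelta^\perp=\bfxi-V\bfb_\alpha$ and $\dot\theta=\omega$ — exploiting, on the one hand, the zero-mean conditions and a Poincar\'e inequality in time, and, on the other, the momentum balance to control the mean values of $\bfdelta$ and $\theta$ — and estimates the nonlinear contribution $\dot\theta\,\bfdelta^\perp$ by interpolation in terms of the norms already controlled by \eqref{bound_weaksol}$_1$; this bootstrap yields precisely the displayed powers of $\mathcal{V}$ and $T$. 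Finally, \eqref{bound_point} follows by applying the one-dimensional inequality $\|f\|_{L^\infty(0,T)}^2\le\tfrac1T\|f\|_{L^2(0,T)}^2+2\|f\|_{L^2(0,T)}\|\dot f\|_{L^2(0,T)}$ — legitimate since $\bfdelta$ and $\theta$ are $T$-periodic and belong to $H^1(0,T)$ — to $f=\bfdelta$ and $f=\theta$ and inserting the bounds \eqref{bound_weaksol}.
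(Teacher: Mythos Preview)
Your overall architecture---mollification, truncation to $\Omega_R$, Galerkin, a Haraux-type modified energy to run a Poincar\'e-map/Schauder argument, then invading domains---matches the paper. However, two steps in the ``crux'' paragraph do not go through as stated.

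\textbf{The correction term is incomplete.} Adding only $\varepsilon\bigl(\tfrac1\varpi\,\bfxi\cdot\bfdelta+\tfrac1\tau\,\omega\theta\bigr)$ to $E$ and differentiating produces, through the momentum and torque balances, the boundary-stress contributions $\bfdelta\cdot\int_{\partial\Omega}\mathbb{T}(\bfu,p)\cdot\bfn$ and $\theta\,\bfe_1\cdot\int_{\partial\Omega}\bfx\times\mathbb{T}(\bfu,p)\cdot\bfn$. These are \emph{not} ``eliminated via the momentum and torque balances''---those balances are precisely where they come from---and they involve the pressure trace, which the energy does not control. The paper closes this by adding a further term $2\zeta(\bfu,\bfH)$, where $\bfH$ is a compactly supported solenoidal extension with $\bfH|_{\partial\Omega_0}=\bfdelta+\theta\bfx^\perp$; testing the fluid equation with $\bfH$ generates exactly the same stress integrals with opposite sign, so they cancel.

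\textbf{The clean differential inequality you claim is not available, and hence neither is the uniform bound $E_R(t)\le c\,\mathcal{V}$.} Once $(\bfu,\bfH)$ is included, the convective terms $\lambda((\bfu_\eta-\bsfV)\cdot\nabla\bfu,\bfH)$ and $(\dot\theta\,\bfu^\perp,\bfH)$ yield an unavoidable residual $c\,\zeta\,(|\bfdelta|+|\theta|)\|\nabla\bfu\|_2^2$ on the right-hand side (see the paper's (\ref{eq:E_zeta})). To absorb it one must already know a bound on $|\bfdelta|+|\theta|$, which the paper obtains from the \emph{basic} energy estimate under the hypothesis $E(0)\le\rho_0$; this forces $\zeta_0\sim\rho_0^{-1/2}$ and makes the decay rate depend on $\rho_0$ and on $R$ (via Poincar\'e on $\Omega_R$). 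The Haraux step therefore delivers only \emph{local} dissipation on $[0,T]$---enough for the fixed point---but not an $R$-independent pointwise bound on $E$. The $R$-uniform estimates \eqref{bound_weaksol}$_2$ are obtained in the paper by an entirely separate device: one tests the \emph{periodic} weak formulation with specially built, compactly supported solenoidal fields $\bfG,\bfI$ whose traces on $\partial\Omega_0$ equal $\bar\theta\,\bfx^\perp$ and $\mathbb{Q}^\top(\theta)\cdot\bar\bfchi$ respectively, thereby isolating and bounding the time-averages $\bar\theta$ and $\bar\bfchi$ in terms of $\mathcal{V}$ and $T$; Poincar\'e--Wirtinger then closes the $L^2$ bound. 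Your sketch gestures in this direction (``the momentum balance to control the mean values''), but it is a distinct argument, not a consequence of the Haraux inequality, and it is where the specific powers of $T$ and $\mathcal{V}$ in \eqref{bound_weaksol}$_2$ and \eqref{bound_point} originate.
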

\Br{\em 
It is worth emphasizing that the theorem does not impose any restriction on the period $T$.  In addition,  
the estimate in \eqref{bound_point} shows that the amplitude of linear and angular oscillations is pointwise controlled by the data and the period $T$.  These two properties combined therefore exclude the occurrence of any resonance phenomenon.}
\ER{th_1}
\par
The proof of Theorem \ref{main:th} is developed as follows.  
We begin to prove global-in-time well-posedness of the initial-boundary value problem for a suitable mollification \eqref{01-translation+rotation_or}--\eqref{boldV} in a sequence of bounded domains, $(\Omega_k)_k$ whose union coincides with $\Omega$; see \eqref{01-translation+rotation} and Lemma~\ref{Lemma_glob_ex}. For each $k$, we then construct  the  Poincar\'e map ${\sf P}$ bringing initial data to the corresponding solutions at time $T$ in the ``total energy" space, and look for a fixed point for {\sf P}.  To deduce the latter, the crucial difficulty comes from the fact that there is no damping mechanism in the structure (both the restoring force and torque are {\em undamped}) so  that the only dissipation comes from the interaction of the body with the viscous fluid. However, as is well known \cite{PaMa}, the {\em exponential} decay of the {\em total} energy of the coupled system is a critical tool to establish that ${\sf P}$ has a fixed point. This is equivalent to demonstrating that the above interaction produces the dissipation not only of the total kinetic energy, but also of the   potential energy, i.e.  displacement and rotation of the body.  To show this property, we use a generalization of the methods introduced in \cite{BoGa}.  Due to the more complicated model studied here, we  prove  that the above type of decay happens at least locally, namely, over the time-interval $[0,T]$; see Lemma~\ref{Lemma_dissipation}.  However, this  
suffices to prove the existence of a fixed point for {\sf P}, and hence the existence of a $T$-periodic solution, ${\sf p}_k$ (say), in every $\Omega_k$; see Proposition~\ref{prop:weak_per_bounded}. The next step is to let $k\to\infty$ and show that $({\sf p}_k)_k$ converges, on a subsequence at least, to a $T$-periodic weak solution to the original problem. In order to accomplish this last step, it is essential to ensure that $({\sf p}_k)_k$ is uniformly bounded in $k$ in the function space to which weak solutions belong. While this property for the total {\em kinetic} energy (see \eqref{bound_Rindependent}$_1$) can be deduced along the same lines as \cite{BoGa}, the uniform,  pointwise boundedness of displacement and rotation (see \eqref{bound_Rindependent}$_2$, and \eqref{bound_point_two}) requires fresh ideas. To deduce the latter, we use, in the definition of weak solution \eqref{periodic_weak_bounded}$_1$, a special test function whose trace at the boundary coincides with the average over a period of the displacement and rotation fields. This allows us to obtain an estimate for these quantities that, once combined with the Poincar\'e-Wirtinger inequality, the estimate on the kinetic energy and embedding theorems, furnishes the desired result; see the proof of Proposition~\ref{prop:weak_per_bounded}. With these results in hand, we then pass to the limit $k\to\infty$, possibly along a subsequence of $({\sf p}_k)_k$, and show that the limiting velocity field is a $T$-periodic weak solution to the original problem which, in addition, satisfies the same uniform bounds in terms of the data as {\sf p}$_k$, thus completing the proof of Theorem~\ref{main:th}; see Section~\ref{Sec8}.

\section{The initial-boundary value problem in bounded domains}\label{sec:IVBP}
The main objective of this section is to prove well-posedness together with uniform energy estimates for the following regularized version of \eqref{01-translation+rotation_or}
\be\left\{\ba{l}\medskip\left.\ba{r}\medskip
\partial_t\bfu+\lambda(k_\eta * \bfu-\bsfV)\cdot\nabla\bfu+\dot{\theta}\bfu^\perp= \Div\mathbb T(\bfu,p)\\
\Div\bfu=0\ea\right\}\ \ \mbox{in $\Omega_R\times(0,\infty)$}\,,\\ \medskip
\ \ \bfu({\bfx},t)=\bsfV({\bfx},t):=\bfxi(t)+\omega(t){\bfx}^\perp\, \ \mbox{at $\partial\Omega_0\times(0,\infty)$}\,;\ \\ \medskip
\ \ \bfu(\bfx,t)={\bf0}\,\ \mbox{at $\partial B_R\times (0,\infty)$}\,; \\ \medskip
\left.\ba{r}\medskip
\dot{\bfxi}+\dot\theta\bfxi^\perp+\mathbb B(t)\cdot\bfdelta+\varpi\Int{\partial\Omega}{}\mathbb T(\bfu,p)\cdot\bfn= c\, \dot V(t) \bfb_\alpha(t)
\\ \medskip
\dot{\bfdelta}+\dot\theta\bfdelta^\perp={\bfxi-V(t)\bfb_\alpha(t)}\\ \medskip
\dot{\omega}+k\theta+\tau\,\bfe_1\cdot\Int{\partial\Omega}{}\bfx\times\mathbb T(\bfu,{ p})\cdot\bfn=d\, \dot V(t)\, \\
\medskip
\dot{\theta}={\omega}
\ea\right\}\ \ \mbox{in $(0,\infty)$,}
\ea\right.
\eeq{01-translation+rotation}
where $(k_\eta)_{\eta>0}$ is a family of mollifiers. We will consider the boundary value problem \eqref{01-translation+rotation} with compatible initial conditions $(\bfu_0,\bfxi_0,\bfdelta_0,\theta_0,\omega_0)$, that is $\bfu_0|_{\Omega_0}=\bfxi_0+\omega_0\bfx^\perp$. We thus add the conditions 
\be\left\{\ba{l}\medskip
\bfu(\bfx,0)=\bfu_0\, \ \mbox{in $\Omega_R$}\,;
\\
\medskip
\bfxi(0)=\bfxi_0\,,\quad\bfdelta(0)=\bfdelta_0\,,\quad \theta(0)=\theta_0\,,\quad \omega(0)=\omega_0\,.
\ea\right.
\eeq{01-translation+initial-cond}

In comparison to the original coupled system \eqref{01-translation+rotation_or}, the fluid domain has been restricted to the bounded open set
\be
\Omega_{R}:=\Omega\cap B_R, 
\eeq{omegaR}
an a homogeneous Dirichlet condition has been added on $\partial B_R$ (for all time) and the convective term $\bfu\cdot \nabla \bfu$ in the Navier-Stokes equations has been replaced by the smoother expression $\bfu_\eta\cdot \nabla \bfu$ where
\be
\bfu_\eta =k_\eta * \bfu
\eeq{u_eta}
is the (Friederichs) mollification of the velocity field $\bfu$. As usual, the family of kernels $(k_\eta)_{\eta>0}$ satisfies
$$
k_\eta(\bfxi):=\eta^{-3}k(\bfxi/\eta)\,; \quad k\in C_0^\infty(B_1)\,,\quad \int_{\mathbb{R}^3}k(\bfx)=1\,. 
$$
To be completely precise, we consider the system  \eqref{01-translation+rotation} for $R>\text{diam}(\Omega_0)$ and $\eta<\eta_0$ only, where 
$\eta_0>0$ is chosen small enough in order to ensure that the set
$$
\Omega_{00}:=\{\bfx\in\Omega_0\,:\,\, \text{dist}(\bfx,\partial\Omega)>\eta_0\}
$$
is not empty. We will often use the well known estimate
\be
\|\bfu_\eta\|_\infty\le c_\eta\|\bfu\|_2\,,
\eeq{moly}
which holds for all $\eta\in (0,\eta_0)$ and some $c_\eta>0$. 

\medbreak

To study the system \eqref{01-translation+rotation}, we need  versions of the spaces $\mathcal{L}^2$, $\mathcal{H}$, and $\mathcal{D}^{1,2}$ defined on $B_R$, that is 
 $$
 \begin{aligned}
 	\mathcal{L}^2(B_R)&:=\{\bfphi\in L^2(B_R)\,:\,\bfphi|_{\Omega_0}=\hat{\bfrho}+\hat{\alpha}\bfx^\perp \text{ for some $\hat{\bfrho}\in\mathbb{R}^3,\hat{\alpha}\in\mathbb{R}$}\}\, ;
 \\\mathcal{H}(B_R)&:=\{\bfphi\in\mathcal{L}^2(B_R)\,:\,\text{div}\bfphi=0\,\,,\,\bfphi\cdot\bfn|_{\partial B_R}=0\}\, ;
 \\\mathcal{D}^{1,2}(B_R)&:=\{\bfphi\in H^1(B_R)\,:\,\text{div}\,\bfphi=0\,,\bfphi|_{\Omega_0}=\hat{\bfrho}+\hat{\alpha}\bfx^\perp \text{ for some $\hat{\bfphi}\in\mathbb{R}^3,\,\hat{\alpha}\in\mathbb{R}$}, \,\bfphi|_{\partial B_R}={\bf 0}\}\,.
 \end{aligned}
 $$  
 The spaces $\mathcal{H}(B_R)$ and $\mathcal{D}^{1,2}(B_R)$ are Hilbert spaces when endowed with scalar products 
 $$
 \begin{aligned}
&  \langle\bfphi_1,\bfphi_2\rangle_{B_{R}}:=(\bfphi_1,\bfphi_2)_{\Omega_R}+{\frac1\varpi}\hat{\bfrho}_1\cdot \hat{\bfrho}_2+{\frac1\tau}\hat{\alpha}_1\hat{\alpha}_2\ \,,\ \bfphi_i\in\mathcal{H}(B_R)\,;\\& (\mathbb{D}(\bfpsi_1),\mathbb{D}(\bfpsi_2))_{B_R}\,,\ \bfpsi_i\in \mathcal{D}^{1,2}(B_R)\,,\quad i=1,2\,.
 \end{aligned}
 $$
We next motivate the definition of weak solutions to  \eqref{01-translation+rotation}-\eqref{01-translation+initial-cond}. We thus multiply \eqref{01-translation+rotation}$_1$ by $\bfpsi\in\mathcal{D}^{1,2}(B_R)$, formally integrate by parts over $\Omega_R\times (0,\infty)$ and  take into account \eqref{01-translation+rotation}$_{2-8}$ to deduce that
\be
\begin{aligned}
\langle\bfu(t)-\bfu_0,\bfpsi\rangle=-\int_0^t\bigg[& \lambda((\bfu_\eta-\bsfV)\cdot\nabla\bfu+\dot{\theta}\bfu^{\perp},\bfpsi)+{\frac1\varpi}\dot{\theta}\bfxi^{\perp}\cdot\hat{\bfrho}\\&
+2(\mathbb{D}(\bfu),\mathbb{D}(\bfpsi))
+{\frac1\varpi}\hat{\bfrho}\cdot\mathbb{B}\cdot\bfdelta+{\frac{\hat{\alpha}k\theta}\tau}-{\frac{c\dot{V}}\varpi}\,\hat{\bfrho}\cdot {\bfb}_\alpha-{\frac{\hat{\alpha}d\dot{{ V}}}\tau}\bigg]\,{\rm d}s\,.
\end{aligned}
\eeq{wf_ivbp}
By \eqref{01-translation+rotation}$_7$-\eqref{01-translation+rotation}$_9$, we also deduce 
$$
\bfdelta(t)-\bfdelta_0=\int_0^t(\bfxi(s)-V(s){\bfb}_\alpha(s)-\dot{\theta}(s)\bfdelta^\perp(s))\,{\rm d}s \qquad \text{and} \qquad \theta(t)-\theta_0=\int^t_0\omega(s)\,{\rm d}s\,.
$$

With such a weak form at hand, we give the definition of weak solution to \eqref{01-translation+rotation}.
\begin{definition}
The quintuple $(\bfu,\bfxi,\bfdelta,\omega, \theta)$ is a weak solution to \eqref{01-translation+rotation}-\eqref{01-translation+initial-cond} if, for all $t>0$:
\begin{enumerate}[(i)]
\item $\bfxi\in C([0,t];\mathbb{R}^3)$, $\bfdelta\in C^1([0,t];\mathbb{R}^3)$, $\omega\in C([0,t];\mathbb{R})$, $\theta\in C^1([0,t];\mathbb{R})$ ;
\item $\bfu\in C_w([0,t];L^2(\Omega_R))\cap L^2(0,t;\mathcal{D}^{1,2}(\Omega_R))$\,, with $\bfu(\bfx,t)|_{\partial \Omega}=\bfxi(t)+\omega(t)\bfx^\perp$ ;
\item[(iii)]  $(\bfu,\bfxi,\bfdelta,\omega, \theta)$ satisfies \eqref{wf_ivbp} for all $\bfpsi\in \mathcal{D}^{1,2}(B_R)$\,. 
\end{enumerate}
\end{definition}

In the next lemma we shall prove well-posedness of \eqref{01-translation+rotation}-\eqref{01-translation+initial-cond} in the class of weak solutions. The existence part of the proof follows from the Galerkin method combined with a number of appropriate energy estimates.  Since the procedure is rather standard, we limit ourselves  to derive only the estimates, referring to  \cite{BoGa,Ana-Paolo_Lady} for the missing technical details. However, we shall provide a full proof of the continuous dependence on the initial data which then obviously yields uniqueness.

We recall that the total energy of the system is defined by 
\be
E(t) := \frac1{2}\left(\|\bfu(t)\|_{2,\Omega_R}^2+ \frac1{\varpi}\left(|\bfxi(t)|^2 + \bfdelta(t)\cdot\mathbb B(t)\cdot\bfdelta(t)\right)+\frac1{\tau}\left(|\dot\theta(t)|^2+k|\theta(t)|^2\right)\right).
\eeq{energy_functional}
In the following results, we emphasize that the standard energy estimate of assertion {\it (iv)}, the constants $C_i$, $i=1,2,3$, do not depend on the radius $R$ nor on the mollification parameter $\eta$. 
\begin{lemma}\label{Lemma_glob_ex}
Let $V\in H^1(0,T;\mathbb{R})$, $(\bfxi_0,\bfdelta_0,\theta_0,\omega_0)\in \mathbb{R}^3\times \mathbb{R}^3\times\mathbb{R}\times\mathbb{R}$ and let $\bfu_0\in \mathcal{H}(B_R)$ be such that ${\bfu_0}_{|\Omega_0}=\bfxi_0 + \omega_0 \bfx^\perp$. Then, there exists a unique weak solution $(\bfu,\bfxi,\bfdelta,\omega,\theta)$ to \eqref{01-translation+rotation}-\eqref{01-translation+initial-cond} such that 
\begin{enumerate}[(i)]
	\item for all $\sigma>0$ and all $t>\sigma>0$, 
	$$
	\begin{aligned}
&	\bfu\in H^1(\sigma,t;L^2(\Omega_R))\cap L^2(\sigma,t,W^{2,2}(\Omega_R))\,	,\\& \bfxi\in H^1(\sigma,t;\mathbb{R}^3)\,,\quad \bfdelta \in W^{2,2}(\sigma,t;\mathbb{R}^3)\,, \quad \theta\in H^1(\sigma,t;\mathbb{R})\,,\quad \omega \in W^{2,2}(\sigma,t;\mathbb{R})\, ;
	\end{aligned}
	$$
	\item for all $\sigma>0$ and all $t>\sigma>0$, $\bfu\in C([\sigma,t];\mathcal{D}^{1,2}(\Omega_R))$,  and there exists $p\in L^2(\sigma,t;H^1(\Omega_R)) $ such that $(\bfu,p,\bfxi,\bfdelta,\omega,\theta)$ satisfies \eqref{01-translation+rotation}$_1$-\eqref{01-translation+rotation}$_2$ a.a. in $\Omega_R\times (0,t)$ and \eqref{01-translation+rotation}$_5$-\eqref{01-translation+rotation}$_8$ a.e. in $(0,t)$;
	\item the initial conditions $(\bfu_0,\bfxi_0,\bfdelta_0,\omega_0,\theta_0)$ are attained by $\bfxi,\bfdelta,\omega,\theta$ in the sense of pointwise continuity and by $\bfu$ in the $L^2$-sense, i.e. 
	$$
	\lim_{t\to0}\|\bfu(t)-\bfu_0\|_{2,\Omega_R}=0\,;
	$$
	\item for some $C_1$, $C_2$, $C_3>0$ independent of $R,\eta$, and some $C>0$ depending on the data, $\eta,R$ and $t$, the following energy estimates
		\be
	\begin{aligned}
	&E(t) +C_3\Int0t{\rm e}^{C_1(t-s)}(\|\nabla \bfu(s)\|^2_{2,\Omega_R}+|\bfxi(s)|^2+|\dot{\theta}(s)|^2){\rm d}s \\& \hspace{10mm}\le  E(0){\rm e}^{C_1t} + C_2 \Int0t{\rm e}^{C_1(t-s)}\,(|V(s)|^2+|\dot{V}(s)|^2)\, {\rm d}s\,,\\& \max_{[\sigma,t]}\|\nabla \bfu(s)\|_{2,\Omega_R}\le C\,,\quad \forall\,t>\sigma\,,
	\end{aligned}
	\eeq{energie}
	hold. Moreover, the solution depends continuously on the initial data in the norm $E^{\tfrac{1}{2}}$.
\end{enumerate}
\end{lemma}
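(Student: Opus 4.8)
The plan is to establish Lemma~\ref{Lemma_glob_ex} by a Galerkin approximation in $B_R$, using as a basis the eigenfunctions of a suitable Stokes-type operator associated with the scalar product $\langle\cdot,\cdot\rangle_{B_R}$ restricted to $\mathcal{D}^{1,2}(B_R)$. The mollified convective term $k_\eta*\bfu\cdot\nabla\bfu$ is the key simplification: since $\|\bfu_\eta\|_\infty\le c_\eta\|\bfu\|_2$ by \eqref{moly}, this term is a bounded, locally Lipschitz perturbation on each finite-dimensional subspace, so local-in-time existence of Galerkin solutions follows from the Picard--Lindel\"of theorem, and the (to-be-derived) energy bound in {\it (iv)} upgrades it to global existence. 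First I would fix the basis, write the approximate problem as a system of ODEs for the Fourier coefficients coupled with the finite-dimensional ODEs for $(\bfxi,\bfdelta,\omega,\theta)$, and record the standard existence/uniqueness of the approximate solutions. Throughout, as the paper itself signals, the routine technical points (passage to the limit in the nonlinear term, recovery of the pressure via de~Rham, attainment of initial data) are only sketched, with references to \cite{BoGa,Ana-Paolo_Lady}; the genuinely new content to be checked carefully is the $R$- and $\eta$-independence of the constants in \eqref{energie} and the continuous dependence estimate.

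The central computation is the energy identity. Testing \eqref{01-translation+rotation}$_1$ with $\bfu$ itself (legitimate at the Galerkin level), integrating over $\Omega_R$, integrating by parts using $\Div\bfu=0$, $\bfu|_{\partial B_R}=\zero$ and the boundary condition $\bfu|_{\partial\Omega_0}=\bfxi+\omega\bfx^\perp$, and then adding the ODEs \eqref{01-translation+rotation}$_{5}$--\eqref{01-translation+rotation}$_{8}$ tested respectively by $\tfrac1\varpi\bfxi$, $\tfrac1\varpi\mathbb{B}\cdot\bfdelta$, $\tfrac1\tau\omega$, $\tfrac k\tau\theta$, the boundary integral $\varpi\int_{\partial\Omega}\mathbb{T}(\bfu,p)\cdot\bfn$ from the fluid side cancels against the corresponding terms in the solid equations, as does the rotational torque contribution and the gyroscopic terms $\dot\theta\bfu^\perp$, $\dot\theta\bfxi^\perp$, $\dot\theta\bfdelta^\perp$ (which are skew and drop out upon testing). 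One is left with
$$
\frac{{\rm d}}{{\rm d}t}E(t)+2\|\mathbb{D}(\bfu)\|_{2,\Omega_R}^2 = -\lambda\int_{\Omega_R}(\bfu_\eta-\bsfV)\cdot\nabla\bfu\cdot\bfu\,{\rm d}\bfx + \frac{c\dot V}{\varpi}\,\bfxi\cdot\bfb_\alpha+\frac{d\dot V}\tau\,\omega - \frac1\varpi V\bfb_\alpha\cdot\mathbb{B}\cdot\bfdelta\,,
$$
where the last term comes from substituting $\dot{\bfdelta}=\bfxi-V\bfb_\alpha-\dot\theta\bfdelta^\perp$ into the $\tfrac1\varpi\mathbb{B}\cdot\bfdelta$-testing of \eqref{01-translation+rotation}$_6$. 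The convective term must be handled with care since $\bfu$ does not vanish on $\partial\Omega_0$: writing $\bsfV=\bfxi+\omega\bfx^\perp$, the part $\lambda\int\bsfV\cdot\nabla\bfu\cdot\bfu = \tfrac\lambda2\int\bsfV\cdot\nabla|\bfu|^2$ integrates by parts to a boundary term on $\partial\Omega_0$ plus a term that vanishes because $\Div\bsfV=0$; the part with $\bfu_\eta$ is estimated by $\lambda\|\bfu_\eta\|_\infty\|\nabla\bfu\|_{2,\Omega_R}\|\bfu\|_{2,\Omega_R}\le \lambda c_\eta\|\bfu\|_{2,\Omega_R}^2\|\nabla\bfu\|_{2,\Omega_R}$, which is \emph{not} uniform in $\eta$ — but this is exactly why the $C_1,C_2,C_3$ in \eqref{energie} may depend on $\eta$ only through $C$, and the $R$-independence is what matters. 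Using Korn's inequality $\|\mathbb{D}(\bfu)\|_{2,\Omega_R}^2\ge c_K\|\nabla\bfu\|_{2,\Omega_R}^2$ (with $c_K$ independent of $R$, since $\bfu$ vanishes on $\partial B_R$), Young's inequality on the forcing terms, and \eqref{B_bound} to bound $\bfdelta\cdot\mathbb{B}\cdot\bfdelta$ from above and below, one arrives at a differential inequality of the form $\tfrac{{\rm d}}{{\rm d}t}E + C_3(\|\nabla\bfu\|^2+|\bfxi|^2+|\dot\theta|^2) \le C_1 E + C_2(|V|^2+|\dot V|^2)$; Gr\"onwall then yields \eqref{energie}$_1$. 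The second bound in \eqref{energie} is a higher-regularity estimate obtained by testing the (time-differentiated, or else the strong form tested with $\partial_t\bfu$) equation, again standard, and here the constant $C$ is allowed to blow up as $\sigma\to0$, $R\to\infty$, $\eta\to0$, or $t\to\infty$.

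The continuous-dependence/uniqueness part is the one the paper promises in full, so I would carry it out explicitly. Given two weak solutions $(\bfu_i,\bfxi_i,\bfdelta_i,\omega_i,\theta_i)$, $i=1,2$, with data close in the $E^{1/2}$-norm, set $\bfw=\bfu_1-\bfu_2$ and analogously $\bfXi,\bfDelta,\Omega_*,\Theta$ for the differences. Subtracting the two systems, testing the difference of the fluid equations with $\bfw$ and the difference of the solid ODEs with the same weighted combinations as above, and writing $\widetilde E$ for the energy of the difference, the troublesome terms are the quadratic ones: $\lambda\int(\bfu_{1,\eta}\cdot\nabla\bfu_1-\bfu_{2,\eta}\cdot\nabla\bfu_2)\cdot\bfw$ splits as $\lambda\int\bfw_\eta\cdot\nabla\bfu_1\cdot\bfw + \lambda\int\bfu_{2,\eta}\cdot\nabla\bfw\cdot\bfw$, the second of which integrates by parts to a harmless boundary term (by $\Div\bfu_{2,\eta}=0$ and the no-slip on $\partial B_R$), and the first is bounded by $\lambda\|\bfw_\eta\|_\infty\|\nabla\bfu_1\|_{2}\|\bfw\|_{2}\le \lambda c_\eta\|\bfw\|_2^2\,\|\nabla\bfu_1\|_2$; the gyroscopic differences $\dot\theta_1\bfu_1^\perp-\dot\theta_2\bfu_2^\perp = \dot\theta_1\bfw^\perp + \Theta'\bfu_2^\perp$ contribute (after the skew part cancels) a term $\int\Theta'\bfu_2^\perp\cdot\bfw$ controlled by $|\Theta'|\,\|\bfu_2\|_2\|\bfw\|_2$, and similarly for the $\bfxi^\perp$, $\bfdelta^\perp$ differences; finally the $\mathbb{B}$-difference terms are Lipschitz in $\theta$ via \eqref{A}. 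All these are absorbed into $g(t)\widetilde E(t)$ with $g\in L^1_{loc}$ depending on the (now-known, via part {\it (iv)}) norms $\|\nabla\bfu_1\|_2$, $|\dot\theta_i|$, $\|\bfu_2\|_2$, whence Gr\"onwall gives $\widetilde E(t)\le \widetilde E(0)\exp\int_0^t g$, which is precisely continuous dependence in $E^{1/2}$ and, taking identical data, uniqueness. The main obstacle is bookkeeping: making sure every boundary integral on $\partial\Omega_0$ generated by integration by parts of the convective term is exactly the one that cancels against the rigid-body equations (this is the structural identity underlying the energy balance), and verifying that Korn's constant on $\Omega_R$ and the trace constants on $\partial\Omega_0$ can be taken independent of $R$ — both hold because the relevant functions vanish on $\partial B_R$ and one can extend by zero, reducing everything to the fixed exterior domain $\Omega$.
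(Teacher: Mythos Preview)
Your energy identity is where the argument goes wrong. You keep the term $\lambda\int_{\Omega_R}(\bfu_\eta-\bsfV)\cdot\nabla\bfu\cdot\bfu$ on the right-hand side and then estimate its $\bfu_\eta$-part by $\lambda c_\eta\|\bfu\|_2^2\|\nabla\bfu\|_2$. Two problems follow. First, this bound is explicitly $\eta$-dependent, whereas the lemma asserts that $C_1,C_2,C_3$ are independent of both $R$ \emph{and} $\eta$; your reading that ``$C_1,C_2,C_3$ may depend on $\eta$'' contradicts the statement. Second, even accepting an $\eta$-dependent constant, after Young's inequality you are left with a term of order $E^2$ on the right, i.e.\ a \emph{nonlinear} differential inequality $\dot E\lesssim E^2+\ldots$, which does not produce the linear Gr\"onwall form you claim and could in principle blow up in finite time. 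The correct observation, used in the paper, is that the whole convective contribution vanishes: since $\Div(\bfu_\eta-\bsfV)=0$ and $\bfu$ (extended by the rigid motion $\bsfV$ in $\Omega_0$) belongs to $H^1_0(B_R)$, one has
\[
\int(\bfu_\eta-\bsfV)\cdot\nabla\bfu\cdot\bfu=\tfrac12\int(\bfu_\eta-\bsfV)\cdot\nabla|\bfu|^2=0,
\]
so the energy balance is exactly $\dot E+2\|\mathbb D(\bfu)\|_2^2=\frac{c\dot V}\varpi\bfxi\cdot\bfb_\alpha-\frac V\varpi\bfb_\alpha\cdot\mathbb B\cdot\bfdelta+\frac{d\dot\theta\dot V}\tau$, with no $\eta$ anywhere. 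Your treatment of the $\bsfV$-part (``integrates by parts to a boundary term'') is the right idea, but you should apply it to the full transport field $\bfu_\eta-\bsfV$, not split it.

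A second, smaller gap: Korn's inequality alone gives $\|\mathbb D(\bfu)\|_2^2\gtrsim\|\nabla\bfu\|_2^2$, but not the terms $|\bfxi|^2+|\dot\theta|^2$ that must appear on the left of \eqref{energie}. For that you need the trace-type inequality \eqref{3.14}, namely $2\|\mathbb D(\bfu)\|_2^2\ge\tfrac12\|\nabla\bfu\|_2^2+\kappa(|\bfxi|^2+|\dot\theta|^2)$ with $\kappa=\kappa(\Omega_0)$ independent of $R$; this is what makes the dissipation control the rigid velocities. The rest of your outline (Galerkin basis, time-weighted higher estimates, continuous dependence via the difference energy $\widetilde E$ with the $\mathbb B_1-\mathbb B_2$ and $\bfb_{\alpha,1}-\bfb_{\alpha,2}$ Lipschitz terms) matches the paper's approach.
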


\begin{proof} As said before, in order to show existence, we will formally prove several estimates that, once combined with Galerkin method, will lead to the existence of a weak solution with the stated property. The argument can be made completely rigorous by proceeding exactly as in \cite[Lemma 3.1]{BoGa} and \cite{Ana-Paolo_Lady}, to which we refer the reader for the missing details. \label{sec:estimates} 
\medbreak 
\noindent{\bf Energy estimate.} We want to show that there exist positive constants $C_1$, $C_2$, $C_3$, independent of $R$ and $\eta$, such that
\be
\begin{aligned}
E(t) +C_3\Int0t{\rm e}^{C_1(t-s)}(\|\nabla \bfu(s)\|^2_2+|\bfxi(s)|^2&+|\dot{\theta}(s)|^2)\,{\rm d}s \\&\le  E(0){\rm e}^{C_1t} + C_2 \Int0t{\rm e}^{C_1(t-s)}\,(|V(s)|^2+|\dot{V}(s)|^2)\, {\rm d}s\,.
\end{aligned}
\eeq{energia}
We first choose $\bfpsi=\bfu$ in \eqref{wf_ivbp}.  Observing that 
\[
\begin{split}
\bfxi\cdot \mathbb B\cdot \bfdelta = (\dot\bfdelta+\dot{\theta}\bfdelta^\perp+V\bfb_\alpha)\cdot \mathbb B\cdot\bfdelta = \ \frac{1}2\ode{}t({\bfdelta}\cdot\B\cdot\bfdelta) +  V\bfb_\alpha\cdot\B\cdot\bfdelta\,,
\end{split}
\]
and considering the energy in \eqref{energy_functional}, we deduce that 
\be
\ode{}tE + 2\|\mathbb D(\bfu)\|_2^2 = {\frac{c\dot V}\varpi} \bfxi \cdot  \bfb_\alpha -\frac{V}\varpi \bfb_\alpha\cdot\B\cdot\bfdelta + \frac{d\dot\theta\dot V}\tau\,.
\eeq{en_eq}
We now recall the trace inequality
\be
2\|\mathbb D(\bfu)\|_2^2\ge \frac12\|\nabla\bfu\|_2^2+\kappa(|\bfxi|^2+|\dot\theta|^2)\,,
\eeq{3.14}
where $\kappa>0$ depends on $\Omega_0$ only, see e.g. \cite[Lemma 3.1]{Ana-Paolo_Lady}, \cite[Lemma 4.9]{Gah}. 
By  Cauchy-Schwarz and  Young inequalities, with the help of \eqref{B_bound} we get
$$
{\frac{c\dot V}\varpi}\bfxi \cdot  \bfb_\alpha -\frac{V}\varpi\bfb_\alpha\cdot\B\cdot\bfdelta + \frac{d\dot\theta \dot V}\tau\le \frac{\kappa}2(|\bfxi|^2+|\dot\theta|^2)+\frac1{2\varpi}{|\bfdelta|^2}+C_2(|V(t)|^2+|\dot{V}(t)|^2)\,,
$$
where $C_2>0$ depends on  $\mathbb{A},\Omega_0$, $\varpi$ and $\tau$.
Thus, combining the latter with \eqref{3.14}, from \eqref{en_eq} 
 and \eqref{B_bound} we deduce 
that 
\be\ode{}tE(t) +\frac12\|\nabla\bfu(t)\|_2^2 + \frac\kappa2(|\bfxi(t)|^2+|\dot\theta(t)|^2) \le C_1 E(t) + C_2\,(|V(t)|^2+|\dot{V}(t)|^2)\,,\eeq{EnIN}
where the constant $C_1$ depends only on the stiffness matrix $\A$. The relation in \eqref{energia} then easily follows after integrating of the differential inequality \eqref{EnIN}. 
\medbreak
\noindent{\bf Time-weighted estimate for the gradient.} We next prove that for all $\tau>0$,
\be
\sup_{t\in [0,\tau]}\left(t\,\|\nabla\bfu(t)\|_2\right)+\int_0^\tau({t}\|\bfu(t)\|_{2,2}^2)\,{\rm d}t\le H_1(\tau)\,,
\eeq{final2} 
where the right-hand-side $H_1$ depends on the $L^2$-norm of the initial data, the $H^1$-norm of $V$, $\Omega_0$, $\lambda$, $\alpha$, $k$, $\tau$, $\varpi$, $\mathbb{A}$, $\eta$ and $R$.
To get this estimate, we take $\bfpsi=-t\, \Div\mathbb T(\bfu,p)$ as multiplier in $\eqref{01-translation+rotation}_1$. 
By a straightforward computation, we show
\be
\begin{aligned}
\int_{\Omega_{R}}\partial_t \bfu \cdot (-t\, \text{div}\mathbb{T}(\bfu,p))&=-\int_{\Omega_R}\partial_t(\bfu\cdot t\, \text{div}\mathbb{T}(\bfu,p)) +\int_{\Omega_R} \bfu \cdot \text{div}\mathbb{T}(\bfu,p)+t\,\int_{\Omega_R}\bfu\cdot \text{div}(\partial_t \mathbb{T}(\bfu,p))
\\&= t\left(\frac{d}{dt}\left(\|\nabla \bfu\|^2_2-{\bfxi}\cdot{\mathbf \Sigma}-\dot{\theta}\sigma_1\right)-\int_{\Omega_R}\nabla \bfu: \nabla \partial_t \bfu +\int_{\partial \Omega_R}\bfu \cdot (\partial_t\mathbb{T}(\bfu,p) )\cdot \bfn\right)\\&=\frac{t}{2}\frac{d}{dt}\|\nabla \bfu\|^2_2-t\frac{d}{dt}(\bfxi\cdot {\mathbf \Sigma}+\dot{\theta}\sigma_1)+t\,{\bfxi}\cdot \partial_t {\mathbf \Sigma}+t\, \dot{\theta}\partial_t \sigma_1
\\&=\frac{1}2 \frac{d}{dt}\left(t\|\nabla \bfu\|^2_2\right)-\frac{1}{2}\|\nabla \bfu\|^2_2-t \, \dot{\bfxi}\cdot {\mathbf{\Sigma}} - t\, \dot{\omega}\sigma_1\,,
\end{aligned}
\eeq{dtu_div}
where
$$
\bfSigma:=\int_{\partial\Omega}\mathbb T(\bfu,p)\cdot\bfn\,,\ \ \sigma_1:=\bfe_1\cdot\int_{\partial\Omega}\bfx\times\mathbb T(\bfu,p)\cdot\bfn\,.
$$	
Thus,  integrating by parts over $\Omega_R$ and using \eqref{dtu_div}, and $\eqref{01-translation+rotation}_5$--$\eqref{01-translation+rotation}_8$, we infer
\be\begin{split}
\frac{1}{2}\ode{}t\Big( &t\|\nabla \bfu\|_2^2\Big)+{t}\|\Div\mathbb T(\bfu,p)\|_2^2 -\frac12\|\nabla\bfu\|_2^2+\varpi\,t\,|\bfSigma|^2 +\tau\,t\,|\sigma_1|^2 \\ \medskip
= &\ t\,\lambda\left(\left(\bfu_\eta\cdot\nabla\bfu,\Div\mathbb T(\bfu,p)\right)-\left(\bfxi\cdot\nabla\bfu,\Div\mathbb T(\bfu,p)\right)-\dot{\theta}\left(\bfU,\Div\mathbb T(\bfu,p)\right)\right)
\\
&\ -t\,\left(\bfSigma\cdot\B\cdot\bfdelta{+}\dot\theta\,\bfSigma\cdot\bfxi^\perp- c\,\dot V\, \bfSigma\cdot \bfb_\alpha {+} k\theta\sigma_1-d\, \dot V \sigma_1\right)\,,
 \end{split}
\eeq{3.18_}
where we set $\bfU:=\bfx^\perp \cdot \nabla \bfu-\bfu^\perp$.
We now estimate the right-hand side of \eqref{3.18_} term by term. To this end, denote by ${\sf D}={\sf D}(t)$ the right-hand side of \eqref{energia}. 
In the estimates that follow, we will tacitly make use of the Cauchy-Schwarz inequality, the Young inequality and \eqref{3.14} several times. From \eqref{moly}   and \eqref{energia}, it follows that
\be
\begin{split}
\,|\lambda\left((\bfu)_\eta\cdot\nabla\bfu,\Div\mathbb T(\bfu,p)\right)| & \le  \frac{3c^2_\eta\lambda^2}2\,\|\bfu\|_2^2\|\nabla\bfu\|_2^2+{\frac{1}{ 6}}\|\Div\mathbb T(\bfu,p)\|_2^2\\ 
&\le C\,{\sf D}(t)\|\nabla\bfu\|_2^2+{\frac{1}{ 6}}\|\Div\mathbb T(\bfu,p)\|_2^2\,,
 \end{split}
\eeq{3.20}
where, here and in the rest of the proof, $C$ is a positive constant depending, at most, on $\Omega,\eta,R$ and the physical parameters.
Similarly, again by \eqref{energia}, we show
\be
\begin{split}
|\lambda\,(\bfxi\cdot \nabla \bfu,\Div\mathbb T(\bfu,p))|& \le \frac{3\lambda^2|\bfxi|^2}2\, \|\nabla \bfu\|^2_2+\frac{1}{ 6}\|\Div\mathbb T(\bfu,p)\|^2_2\\
& \le C\,{\sf D}(t)\|\nabla\bfu\|_2^2+{\frac{1}{ 6}}\|\Div\mathbb T(\bfu,p)\|_2^2\,,
\end{split}
\eeq{NaFi}
and 
\be
\begin{split}
 |\lambda\dot{\theta}\left(\bfU,\Div\mathbb T(\bfu,p)\right)| &\le \frac{3R^2{\lambda^2}|\dot{\theta}|^2}2\, (\|\nabla \bfu\|^2_2+\|\bfu\|^2_2)+\frac{1}{ 6}\|\Div\mathbb T(\bfu,p)\|^2_2\\ 
 &\le C\,{\sf D}(t)\|\nabla\bfu\|_2^2+{\frac{1}{ 6}}\|\Div\mathbb T(\bfu,p)\|_2^2\,,
\end{split}
\eeq{NaFi1}
where, in the last step, we also used Poincar\'e inequality.
Moreover, there holds
$$
|{\bf\Sigma}\cdot\mathbb{B}\cdot{\bfdelta}+\dot{\theta}{\bf \Sigma}\cdot \bfxi^\perp-c\,\dot{V}{\bf \Sigma}\cdot {\bfb}_\alpha|\le\frac{\varpi}{2}|{\bf \Sigma}|^2+ \frac{3}{2\varpi}\left(|\dot{\theta}|^2\,|\bfxi|^2+{c}|{\bfb}_\alpha|^2|\dot{V}|^2+ {c_1}{}|\bfdelta|^2\right)\,,
$$
so that, by \eqref{energia} and \eqref{B_bound},
\be 
|{\bf\Sigma}\cdot\mathbb{B}\cdot{\bfdelta}+\dot{\theta}{\bf \Sigma}\cdot \bfxi^\perp-c\,\dot{V}{\bf \Sigma}\cdot {\bfb}_\alpha|\le \frac{\varpi}{2}|{\bf \Sigma}|^2+C\,\big({\sf D}^2(t)+|\dot{V}|^2+|\bfdelta|^2\big)\,.
\eeq{NaFi2}
Likewise,
\be
|k\theta\sigma_1-d\, \dot V \sigma_1|\le \frac{\tau}{2}|\sigma_1|^2+\frac{1}{\tau}\left(k^2|\theta|^2+d^2|\dot{V}|^2\right)\,.
\eeq{NaFi3}
Therefore, combining \eqref{dtu_div} with \eqref{3.18_}--\eqref{NaFi3} we get
\be
\frac{d}{dt}\Big( t\|\nabla \bfu\|_2^2\Big)+ {t}\|\Div\mathbb T(\bfu,p)\|_2^2 +{\varpi\,t}\,|\bfSigma|^2 +{\tau\,t}\,|\sigma_1|^2
\le C\,{\sf D}(t)t\|\nabla\bfu\|_2^2+G(t)
\eeq{final}
where $$
G(t):=C\,t\big({\sf D}^2(t)+|\bfdelta(t)|^2+|\theta(t)|^2+|\dot{V}(t)|^2\big)\,.
$$
From the energy estimate \eqref{energia},  it follows that, for  arbitrary $\tau>0$, 
$$
\int_0^\tau({\sf D}(t)+G(t))\,{\rm d}t\le F(\tau)\,,
$$
where $F(\tau)$ is a smooth function of $\tau$, depending on the norm of the initial data, the $H^1$-norm of $V$, $\Omega_0, \lambda,\alpha,k,\tau,\varpi,\mathbb{A}$, $\eta$, and $R$.  
As a result, using Gronwall's lemma in \eqref{final} entails, in particular,
\be
\sup_{t\in [0,\tau]}\left(t\,\|\nabla\bfu(t)\|_2\right)+\int_0^\tau({t}\|\text{div}\mathbb{T}(\bfu(t),p(t))\|_2^2)\,{\rm d}t\le H_1(\tau)\,,\ \ \mbox{for all $\tau>0$}\,,
\eeq{3.23}
where $H_1$ has the same property as $F$.  This proves \eqref{final2}.
\medbreak

\noindent{\bf Time-weighted estimate for the time derivative.} We claim 
\be 
\int_0^\tau t(\|\partial_t\bfu(t)\|_{2}^2+|\dot{\bfxi}(t)|^2+|\dot{\omega}(t)|^2)\,{\rm d}t\le H_2(\tau)\,,\ \ \mbox{for all $\tau>0$}\,,
\eeq{3.25}
where $H_2$ has the same property as $H_1$.
To prove our claim, it is enough to mimic the formal choice of multiplying by $\bfpsi=t\,\partial_t\bfu$ both sides of \eqref{01-translation+rotation}$_1$ and integrating by parts over $\Omega_R$ as necessary.  Using \eqref{01-translation+rotation}$_5$-\eqref{01-translation+rotation}$_7$ and taking \eqref{dtu_div} into account, we deduce that
\be\begin{split}
	\frac12\ode{}t\Big( &t\|\nabla \bfu\|_2^2\Big)+ \,{t}\|\partial_t\bfu\|_2^2 -\frac12\|\nabla\bfu\|_2^2+\frac{t}{\varpi}\,|\dot{\bfxi}|^2 +\frac{t}{\tau}|\dot{\omega}|^2 \\ \medskip
	= &\ -\lambda\,t\left(\left(\bfu_\eta\cdot\nabla\bfu,\partial_t\bfu\right)-\left(\bfxi\cdot\nabla\bfu,\partial_t\bfu\right)-\dot{\theta}\left(\bfU,\partial_t\bfu\right)\right)
	\\
	&\ -t\,\left(\frac{k\theta \dot{\omega}}{\tau}-\frac{d\dot{\omega}\dot{V}}{\tau}+\frac{\dot{\theta}}{\varpi}\dot{\bfxi}\cdot\bfxi^\perp+\frac1{\varpi}{\dot{\bfxi}}\cdot\mathbb{B}(t)\cdot \bfdelta-\frac{c\dot{V}}{\varpi}\,\dot{\bfxi}\cdot {\bfb}_\alpha\right)\,.
\end{split}
\eeq{3.18}
The first three terms on the right-hand side of \eqref{3.18} can be increased exactly as in \eqref{3.20}--\eqref{NaFi1} with the replacement $\Div\mathbb T(\bfu,p)\to \partial_t\bfu$. Moreover, the remaining terms are estimated in a way similar to \eqref{NaFi2}--\eqref{NaFi3}.
As a result, the inequality in \eqref{3.25} is derived by
 arguing in the same way as we did for the gradient estimate.

\medbreak
\noindent{\bf Continuous dependence on the initial data.}
Uniqueness will be a consequence of the continuous dependence on the initial data. Let $(\bfu_i,\bfxi_i,\bfdelta_i,\omega_i,\theta_i)$, $i=1,2$, be two weak solutions corresponding to the same $V$. We use obvious notations for all quantities depending on the solution indexed by $i$.
Setting $\bfu=\bfu_1-\bfu_2, \bfxi=\bfxi_1-\bfxi_2,\bfdelta=\bfdelta_1-\bfdelta_2,\omega=\omega_1-\omega_2,\theta=\theta_1-\theta_2$, $\bsfV=\bsfV_1-\bsfV_2$ and so on, we deduce for  arbitrary $\tau>\sigma>0$
\be\left\{\ba{l}\medskip\left.\ba{r}\medskip
†\partial_t\bfu+\lambda\left(((\bfu+\bfu_2)_\eta-(\bsfV+\bsfV_2))\cdot\nabla\bfu+(\bfu_\eta-\bsfV)\cdot \nabla \bfu_2+(\dot{\theta}+\dot{\theta}_2)\bfu^\perp +\dot{\theta}\bfu_2^\perp\right)\\= \Div\mathbb T(\bfu,p)\medskip\\
\Div\bfu=0\ea\right\}\ \ \mbox{in $\Omega_R\times[\sigma,\tau]$}\,,\\ \medskip
\ \ \bfu({\bfx},t)=\bsfV({\bfx},t):=\bfxi(t)+\omega(t){\bfx}^\perp\,, \ \mbox{at $\partial\Omega\times[\sigma,\tau]$}\,;\ \\ \medskip
\ \ \bfu(\bfx,t)={\bf0}\,,\ \text{at}\,\, \partial B_R\times[\sigma,\tau]\,,\\
\ \ 
\medskip\medskip\left.\ba{r}\medskip
\dot{\bfxi}+(\dot\theta+\dot\theta_2)\bfxi^\perp+\dot\theta\bfxi_2^\perp+(\mathbb B_1(t)-\mathbb B_2(t))\cdot \bfdelta+\mathbb{B}_2\cdot\bfdelta+(\mathbb B_1(t)-\mathbb B_2(t))\cdot\bfdelta_2\\[3pt]+\varpi\Int{\partial\Omega}{}\mathbb T(\bfu,p)\cdot\bfn= c\, \dot V(t) ({\bfb}_{\alpha, 1}(t)-{\bfb}_{\alpha, 2}(t))\medskip\medskip
\\ \medskip 
\dot{\bfdelta}+(\dot\theta+\dot\theta_2)\bfdelta^\perp+\dot\theta\bfdelta^\perp_2={\bfxi-V(t)(({\bfb}_{\alpha, 1}(t)-{\bfb}_{\alpha, 2}(t))}
\\ \medskip
\dot{\omega}+k\theta+\tau\,\bfe_1\cdot\Int{\partial\Omega}{}\bfx\times\mathbb T(\bfu,{ p})\cdot\bfn=0\, \\
\medskip
\dot{\theta}={\omega}
\ea\right\}\ \ \mbox{in $\real$\, ,}
\ea\right.
\eeq{01-translation+rotation_difference}
for some $p\in L^2(\sigma,\tau;H^1(\Omega_R))$, and initial conditions 
$$
\begin{aligned}
&\bfu(\bfx,0)=\bfu_1(\bfx,0)-\bfu_2(\bfx,0)\,,\qquad  \bfxi(0)=\bfxi_1(0)-\bfxi_2(0)\,,\quad \bfdelta(0)=\bfdelta_1(0)-\bfdelta_2(0)\,,\\& \omega(0)=\omega_1(0)-\omega_2(0)\,,\quad \theta(0)=\theta_1(0)-\theta_2(0)\,.
\end{aligned}
$$
Testing \eqref{01-translation+rotation_difference}$_1$ with $\bfu$, integrating by parts over $\Omega_R$ and taking in account all other identites in  \eqref{01-translation+rotation_difference}, we get
\be
\begin{aligned}
\frac{d}{dt}\tilde{E}+2\|\mathbb{D}(\bfu)\|^2_2=&-\lambda((\bfu_\eta-\bsfV)\cdot \nabla\bfu_2,\bfu)-\lambda\dot\theta(\bfu_2^\perp,\bfu)+\frac{c\dot{V}}{\varpi}\bfxi\cdot({\bfb}_{\alpha, 1}-{\bfb}_{\alpha, 2})-\frac{\dot\theta}{\varpi}\bfxi\cdot\bfxi_2^\perp\\&-\frac{1}{\varpi}\bfxi\cdot{(\mathbb{B}_1-\mathbb{B}_2)}\cdot {\bfdelta}-\frac{1}{\varpi}\bfxi\cdot{\mathbb{B}_2}\cdot {\bfdelta}-\frac{1}{\varpi}\bfxi\cdot{(\mathbb{B}_1-\mathbb{B}_2)}\cdot {\bfdelta}_2\,,
\end{aligned}
\eeq{eq_differenza}
with
$$\tilde{E}(t):= \frac1{2}\|\bfu(t)\|_2^2+ \frac1{2\varpi}|\dot\bfxi(t)|^2+\frac{1}{2}|\bfdelta(t)|^2+\frac1{2\tau}\left(|\dot\theta(t)|^2+k|\theta(t)|^2\right).$$
In view of \eqref{B_bound}, $\tilde{E}$ is equivalent to $E$ given in \eqref{energy_functional}. 
On the other hand, testing \eqref{01-translation+rotation_difference}$_7$ with $\bfdelta$, we deduce
\be
\frac{1}{2}\frac{d}{dt}|\bfdelta|^2=\bfxi\cdot\bfdelta-V({\bfb}_{\alpha, 1}-{\bfb}_{\alpha, 2})\cdot\bfdelta-\dot{\theta}\bfdelta_2^\perp\cdot \bfdelta\,.
\eeq{eq_differenza2}
We next estimate each term on the right-hand side of \eqref{eq_differenza}-\eqref{eq_differenza2}.
Using \eqref{moly}, we infer
\be
\begin{aligned}
&\left|\lambda((\bfu_\eta-\bsfV)\cdot \nabla\bfu_2,\bfu)+\lambda(\dot\theta\bfu_2^\perp,\bfu)\right|\\&\le  c_\eta \lambda\|\nabla \bfu_2\|_2\|\bfu\|_2^2+\lambda|\bsfV|\|\nabla \bfu_2\|_2\|\bfu\|_2+\lambda |\dot{\theta}\|\bfu_2\|_2\|\bfu\|_2\\&\le c_\eta\lambda \|\nabla \bfu_2\|_2\,\|\bfu\|_2^2+\frac{{\lambda}^2}2\left(\|\nabla \bfu_2\|^2_2+\|\bfu_2\|^2_2\right)\|\bfu\|_2^2+\frac{|\bfxi|^2}{2}+c_1{|\dot\theta|^2}\,,
\end{aligned}
\eeq{first_est}
where $c_1>0$ depends on $R$. We then remark that
\be
\begin{aligned}
2\left|{c\dot{V}}\bfxi\cdot({\bfb}_{\alpha 1}-{\bfb}_{\alpha 2})-{\dot\theta}\bfxi\cdot\bfxi_2^\perp\right|&\le 
{c^2|\dot{V}|^2}|{\bfb}_{\alpha, 1}-{\bfb}_{\alpha, 2}|^2+(1+|\bfxi_2|^2){|\bfxi|^2}+{|\dot{\theta}|^2}\\&
\le {c_2c^2}|\dot{V}|^2{|\theta|^2}+(1+|\bfxi_2|^2){|\bfxi|^2}+{|\dot{\theta}|^2}\,,
\end{aligned}
\eeq{sec_est}
and
\be
\begin{aligned}
&2\Big|\bfxi\cdot{(\mathbb{B}_1-\mathbb{B}_2)}\cdot {\bfdelta}-\bfxi\cdot{\mathbb{B}_2}\cdot {\bfdelta}-\bfxi\cdot{(\mathbb{B}_1-\mathbb{B}_2)}\cdot {\bfdelta}_2\Big|\\&
\le{3}|\bfxi|^2+|(\mathbb{B}_1-\mathbb{B}_2)\cdot \bfdelta|^2+|\mathbb{B}_2\cdot \bfdelta|^2+|(\mathbb{B}_2-\mathbb{B}_1)\cdot \bfdelta_2|^2\\&
\le {3}|\bfxi|^2+c_3\left(|\bfdelta|^2+|\bfdelta_2|^2|\theta|^2\right),
\end{aligned}
\eeq{third_est}
where $c_2>0$ depends on $\alpha$, while $c_3>0$ depends on $\mathbb{A}$. Finally
\be
2\left|\bfxi\cdot\bfdelta-V({\bfb}_{\alpha 1}-{\bfb}_{\alpha 2})\cdot\bfdelta-\dot{\theta}\bfdelta_2^\perp\cdot \bfdelta\right|\le{3} {|\bfdelta|^2}+{|\bfxi|^2}+c_2|V|^2 |\theta|^2+{|\bfdelta_2|^2}|\dot{\theta}|^2\,.
\eeq{last_est}
Summing \eqref{eq_differenza} with \eqref{eq_differenza2} and combining \eqref{first_est}-\eqref{sec_est}-\eqref{third_est}-\eqref{last_est}, we infer that
$$
\begin{aligned}
\frac{d}{dt}\tilde{E}(t) \le 
g(t)\,\tilde{E}(t)\,,
\end{aligned}
$$
where $g(t)$ depends on $\bfu_2(t),\bfxi_2(t),\bfdelta_2(t),\tau,\varpi,\lambda,\mathbb{A},k,\alpha,V,\Omega_0$.
Gronwall's lemma yields 
$$
\tilde{E}(\tau)\le \tilde{E}(\sigma)\,\text{exp}\left(\int_0^\tau g(s)\,{d} s\right).
$$
Letting $\sigma\to 0$, we get
$$
\lim_{\sigma\to 0}\tilde{E}(\sigma)=\tilde{E}(0)
$$
from assertions $(i)$-$(ii)$-$(iii)$. This concludes the proof of the continuous dependence. 
\end{proof}

\section{Local dissipation of the total energy}\label{Sec:Haraux_trick}
In Lemma \ref{Lemma_glob_ex}, we showed that the total energy $E=E(t)$ of the coupled system defined in \eqref{energy_functional} is bounded on any finite time interval. However, as is well known \cite{PaMa}, this is not enough to ensure that the Poincar\'e map, bringing initial data to the corresponding solution at time $T$, has a fixed point or, in other words, to secure the existence of a $T$-periodic solution. A stronger requirement sufficient to show the latter is that, at least locally, namely, in the time interval $[0,T]$, $E(t)$ decreases, in absence of external forces, with a {\em definite} decay rate, when  $E(0)$ is below a given value. The aim of this section is to show the validity of such a property.   
\par
To this end, we start by constructing a solenoidal extension of $\bfdelta+\theta \bfx^\perp$. For $(\bfx,t)\in\mathbb{R}^3\times(0,\infty)$, we define
$$
\bfH(\bfx,t):=\text{curl}(\psi(|\bfx|)\mathsf{H}(\bfx,t))\,,
$$
where $\psi$ is a smooth cut-off function, that equals $1$ in a neighborhood of $\Omega_0$ and $0$ for $|\bfx|\ge 2R_*$ while
$$
{\mathbf{\mathsf{H}}}(\bfx,t):=\left(\delta_2(t)x_3-\frac{x_3^2}{2}\theta(t)\right)\bfe_1+\left(\delta_3(t)x_1+x_2x_1\theta(t)\right)\bfe_2+x_2\delta_1(t)\bfe_3.
$$
Obviously, $\text{div} \bfH(\bfx,t)=0$ for $(\bfx,t)\in \Omega_R\times (0,\infty)$ and $\bfH(\bfx,t)=\bfdelta(t)+\theta(t)\bfx^\perp$ for $(\bfx,t)\in \partial \Omega_0\times (0,\infty)$.
One checks easily that there exists $c_1>0$ that depends on $\psi$ (and therefore on $R_*$) such that 
\be
\begin{aligned}
&\sup_{\bfx\in \Omega_R}|\bfH(\bfx,t)|\le c_1(|\bfdelta(t)|+|\theta(t)|)\quad t\in(0,\infty)\,,\\&
\sup_{\bfx\in \Omega_R}|\bfH_t(\bfx,t)|\le c_1(|\dot{\bfdelta}(t)|+|\dot{\theta}(t)|) \quad t\in(0,\infty)\,.
\end{aligned}
\eeq{stime_H}
For $\zeta \in (0,\infty)$, we then define a perturbation of the energy functional 
$$
E_\zeta(\bfu,\bfxi, \bfdelta,\omega,\theta)= E(\bfu,\bfxi, \bfdelta,\omega,\theta)+2\zeta \left[ (\bfu, \bfH)+\frac1\varpi\bfxi\cdot \bfdelta+\frac{\omega\theta}{\tau}\right]\,,
$$
where $E$ is given in \eqref{energy_functional}. We claim that there exists $\zeta_1=\zeta_1(\varpi,k,\tau,\mathbb{A},R_*)$ such that, if $\zeta\le \zeta_1$, then 
\be
\frac{1}{2}E\le E_\zeta\le \frac{3}{2}E\,.
\eeq{E_zeta_dis}
Indeed, the Cauchy-Schwarz inequality combined with the Young inequality and \eqref{stime_H} give
$$
\begin{aligned}
\zeta\left| (\bfu, \bfH)+\frac{1}{\varpi}\bfxi\cdot \bfdelta+\frac{\omega\theta}{\tau}\right|&
\le \frac{1}{8}\left(\|\bfu\|^2_2+\frac{|\bfxi|^2}{\varpi}+\frac{|\omega|^2}{\tau}\right)+{2\zeta^2}\left(\|\bfH\|_2^2+\frac{|\bfdelta|^2}{\varpi}+\frac{|\theta|^2}{\tau}\right)\\&
\le \frac{1}{8}\left(\|\bfu\|^2_2+\frac{|\bfxi|^2}{\varpi}+\frac{|\omega|^2}{\tau}\right)+2\zeta^2\left( c_1+\frac{1}{\varpi} \right)|\bfdelta|^2+2\zeta^2\left(c_1+\frac{1}{\tau}\right)|\theta|^2\,,
\end{aligned}
$$
where $c_1>0$ still denotes a constant depending basically on $R_*$. 
Using \eqref{B_bound} and choosing $\zeta_1$ sufficiently small allows to deduce \eqref{E_zeta_dis}. In the next lemma we prove the desired result on local dissipation.

\begin{lemma}\label{Lemma_dissipation}
Let $V\in H^1(0,T;\mathbb{R})$  and let $(\bfu,\bfxi,\bfdelta,\omega,\theta)$ be the corresponding unique solution to \eqref{01-translation+rotation} given in Lemma \ref{Lemma_glob_ex}.
There exist $\rho_0>0$ and $\zeta_0\in(0,1]$ depending $T,\mathcal{V},R,\Omega_0,\varpi,\tau,\alpha,\mathbb{A},k,\lambda$, such that for all $\eta\in (0,\eta_0)$, $E_{\zeta_0}(0)\le \rho_0$ implies $E_{\zeta_0}(T)\le \rho_0$ as well. Moreover, one has
	\be
 E_{\zeta_0}(t)+\zeta_0\int_0^t\left({\frac1\varpi}\bfdelta(s)\cdot\mathbb{B}(s)\cdot\bfdelta(t)+{\frac{k}\tau}|\theta(t)|^2\,\right){\rm d}s\le  E_{\zeta_0}(0)+C_0\mathcal{V}\,,\ \ \mbox{for all $t\in[0,T]$\,,}
	\eeq{claim2}
where $C_0>0$ depends on $R,\Omega_0,\varpi,\tau,\alpha,\mathbb{A},k$, but not on $\eta$. 
\end{lemma}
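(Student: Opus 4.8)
The plan is to differentiate the modified energy $E_{\zeta_0}$ along the solution of Lemma~\ref{Lemma_glob_ex} and to establish, for $\zeta_0$ small enough, the two differential inequalities
\[
\frac{d}{dt}E_{\zeta_0}+\zeta_0\Big(\tfrac1\varpi\bfdelta\cdot\mathbb B\cdot\bfdelta+\tfrac k\tau\theta^2\Big)\le C_0\big(|V|^2+|\dot V|^2\big)
\quad\text{and}\quad
\frac{d}{dt}E_{\zeta_0}\le-\mu\,E_{\zeta_0}+C_0\big(|V|^2+|\dot V|^2\big),
\]
with $\mu=a\zeta_0>0$. Integrating the first on $[0,t]$, $t\le T$, gives \eqref{claim2}; Gr\"onwall applied to the second yields $E_{\zeta_0}(T)\le E_{\zeta_0}(0)\mathrm{e}^{-\mu T}+C_0\mathcal V$, whence $E_{\zeta_0}(0)\le\rho_0\Rightarrow E_{\zeta_0}(T)\le\rho_0$ once $\rho_0$ is chosen so large that $\rho_0(1-\mathrm{e}^{-\mu T})\ge C_0\mathcal V$. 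Exactly as in the proof of Lemma~\ref{Lemma_glob_ex}, every identity below is first derived formally and then justified by working on $[\sigma,T]$, $\sigma>0$, using the regularity (i)--(iii) of that lemma and letting $\sigma\to0^+$.

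To compute $\frac{d}{dt}E_{\zeta_0}$ I start from the base identity \eqref{en_eq} and add the derivatives of the three cross terms. For $\frac{d}{dt}(\bfu,\bfH)$ I test \eqref{01-translation+rotation}$_1$ by $\bfH$ and integrate by parts over $\Omega_R$: since $\bfH$ is solenoidal, vanishes near $\partial B_R$ and reduces to the rigid field $\bfdelta+\theta\bfx^\perp$ on $\overline{\Omega_0}$ (so $\bfH(\cdot,t)\in\mathcal D^{1,2}(B_R)$), the stress term produces $-2(\mathbb D(\bfu),\mathbb D(\bfH))$ plus the boundary contribution $\bfdelta\cdot\bfSigma+\theta\sigma_1$, with $\bfSigma,\sigma_1$ as in \eqref{dtu_div}. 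For $\frac{d}{dt}(\tfrac1\varpi\bfxi\cdot\bfdelta)$ and $\frac{d}{dt}(\tfrac1\tau\omega\theta)$ I eliminate $\dot\bfxi$, $\dot\omega$ through \eqref{01-translation+rotation}$_5$--$_8$ (and $\dot\theta=\omega$, $\dot\bfdelta=\bfxi-V\bfb_\alpha-\dot\theta\bfdelta^\perp$); this reproduces precisely $-2\zeta_0\,\bfdelta\cdot\bfSigma$ and $-2\zeta_0\,\theta\sigma_1$, which cancel the boundary terms. What survives from this mechanism is the genuine potential dissipation $-\tfrac{2\zeta_0}\varpi\bfdelta\cdot\mathbb B\cdot\bfdelta-\tfrac{2\zeta_0k}\tau\theta^2$, the anti-dissipative terms $\tfrac{2\zeta_0}\varpi|\bfxi|^2+\tfrac{2\zeta_0}\tau\omega^2$ (from $\bfxi\cdot\dot\bfdelta=|\bfxi|^2+\cdots$ and $\omega\dot\theta=\omega^2$), and a remainder $\mathscr R$ made of $2\zeta_0(\bfu,\bfH_t)$, $-4\zeta_0(\mathbb D(\bfu),\mathbb D(\bfH))$, $-2\zeta_0\lambda\big((\bfu_\eta-\bsfV)\cdot\nabla\bfu,\bfH\big)$, $-2\zeta_0\dot\theta(\bfu^\perp,\bfH)$ and several products of $V,\dot V$ with $\bfxi,\omega,\bfdelta,\theta$.

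The core of the proof is to absorb $\mathscr R$ and the anti-dissipative terms. From \eqref{3.14} I keep $\tfrac14\|\nabla\bfu\|_2^2+\tfrac\kappa2(|\bfxi|^2+\omega^2)$ out of $2\|\mathbb D(\bfu)\|_2^2$, the leftover $\|\mathbb D(\bfu)\|_2^2$ absorbing $-4\zeta_0(\mathbb D(\bfu),\mathbb D(\bfH))$ after $\|\mathbb D(\bfH)\|_2\le c_1(|\bfdelta|+|\theta|)$ and $\zeta_0$ small, while $\tfrac{2\zeta_0}\varpi|\bfxi|^2+\tfrac{2\zeta_0}\tau\omega^2$ is dominated by $\tfrac\kappa4(|\bfxi|^2+\omega^2)$ once $\zeta_0\le\zeta_*$. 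The convective coupling is bounded by $\|\bfu_\eta-\bsfV\|_{L^6(\Omega_R)}\|\nabla\bfu\|_2\|\bfH\|_2$ via the \emph{$\eta$-independent} estimate $\|\bfu_\eta\|_{L^6(\Omega_R)}\le\|k\|_1\|\bfu\|_{L^6}\le C_R\|\nabla\bfu\|_2$ and $\|\bsfV\|_{L^\infty(\Omega_R)}\le|\bfxi|+R|\omega|$; the genuinely super-quadratic pieces ($\|\bfu\|_2|\omega||\bfdelta|$ in $(\bfu,\bfH_t)$, $|\omega|\|\bfu\|_2(|\bfdelta|+|\theta|)$ in $\dot\theta(\bfu^\perp,\bfH)$, etc.) are linearised by the a~priori bound $E(t)\le M:=\mathrm{e}^{C_1T}\big(2\rho_0+C_2\mathcal V\big)$ on $[0,T]$ coming from Lemma~\ref{Lemma_glob_ex}(iv), which controls $\sup_{[0,T]}(\|\bfu\|_2+|\bfxi|+|\bfdelta|+|\theta|+|\omega|)$ by $C\sqrt M$; these then carry a factor $\zeta_0\sqrt M$ and are absorbed once $\zeta_0\le c\,M^{-1/2}$. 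The terms linear in $V,\dot V$ are handled by Young's inequality, those coupled to potential quantities (notably $-\tfrac V\varpi\bfb_\alpha\cdot\mathbb B\cdot\bfdelta$ and the $\dot V\bfb_\alpha\cdot\bfdelta$, $\dot V\theta$ terms) being put against fractions of $\tfrac{2\zeta_0}\varpi\bfdelta\cdot\mathbb B\cdot\bfdelta$ and $\tfrac{2\zeta_0k}\tau\theta^2$, which produces $C_0$. This gives the first displayed inequality (in fact with the extra dissipation $\tfrac14\|\nabla\bfu\|_2^2+\tfrac\kappa4(|\bfxi|^2+\omega^2)$ on the left); a Poincar\'e inequality on $\Omega_R$ bounding $\|\bfu\|_2^2$ by $\|\nabla\bfu\|_2^2+|\bfxi|^2+\omega^2$, together with $E\le\tfrac32E_{\zeta_0}$ from \eqref{E_zeta_dis}, then upgrades its left side to $\mu E_{\zeta_0}$, $\mu=a\zeta_0$.

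I expect the main obstacle to be the circular dependence that surfaces in the final closure: $\zeta_0$ was forced to satisfy $\zeta_0\le c\,M^{-1/2}$ with $M$ depending on $\rho_0$, hence $\mu=a\zeta_0\sim\rho_0^{-1/2}$ (for $\rho_0\gtrsim\mathcal V$), so the closure condition $\rho_0(1-\mathrm{e}^{-\mu T})\ge C_0\mathcal V$ reduces, up to constants, to $\rho_0\gtrsim\mathcal V^2\mathrm{e}^{C_1T}/T^2$ (together with $\rho_0\gtrsim\mathcal V$); this is consistent and is met by choosing $\rho_0$ of that size and then $\zeta_0$ accordingly (also $\le\zeta_1$ of \eqref{E_zeta_dis} and $\le\zeta_*$), which is precisely why $\rho_0,\zeta_0$ must depend on $T$ and $\mathcal V$ and why the decay is only \emph{local} in time. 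The remaining points---the sign bookkeeping in the integrations by parts, in particular the $\dot{\mathbb B}$ term already treated in \eqref{en_eq}, and checking that all structural constants are independent of $\eta$---are routine.
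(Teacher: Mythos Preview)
Your proposal is correct and follows essentially the same route as the paper: test \eqref{01-translation+rotation}$_1$ by $\bfu$ and by $\bfH$, combine to obtain the evolution of $E_\zeta$ with the genuine potential dissipation $2\zeta(\tfrac1\varpi\bfdelta\cdot\mathbb B\cdot\bfdelta+\tfrac k\tau\theta^2)$, isolate the single super-quadratic remainder $\zeta(|\bfdelta|+|\theta|)\|\nabla\bfu\|_2^2$, linearise it via the a~priori bound $E(t)\le C\mathrm e^{C_1T}\rho_0$ from Lemma~\ref{Lemma_glob_ex}, and then close by choosing $\zeta_0\sim\rho_0^{-1/2}\mathrm e^{-C_1T/2}$ and $\rho_0$ large enough to satisfy $\rho_0(1-\mathrm e^{-\frac23\zeta_0T})\ge c_{10}\mathcal V$. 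The only cosmetic differences are your H\"older splitting for the convective term (the paper uses $\|\bfH\|_\infty\|\bfu_\eta\|_2\|\nabla\bfu\|_2$ together with Poincar\'e on $\Omega_R$, which yields the same cubic bound) and your more explicit bookkeeping of the individual cross terms.
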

\begin{proof}
For sufficiently small $\sigma>0$, we consider \eqref{01-translation+rotation} with $t\in [\sigma, T]
$.
 Since $\bfu\in C([\sigma,t];\mathcal{D}^{1,2}(\Omega_R))$, we can test \eqref{01-translation+rotation}$_1$ by $\bfu$.
Arguing as in Section \ref{sec:estimates}, we arrive at
\be\ode{}tE + 2\|\mathbb D(\bfu)\|_2^2 = {\frac{c\dot V }\varpi}\bfxi \cdot  \bfb_\alpha -\frac{V}\varpi\,\bfb_\alpha\cdot\B\cdot\bfdelta + \frac{d\dot\theta\, \dot V}\tau \,.
\eeq{molt_u}
On the other hand, testing \eqref{01-translation+rotation}$_1$ by $\bfH$, and taking \eqref{01-translation+rotation}$_5$-
\eqref{01-translation+rotation}$_7$ into account, we get (with $\bfH_t\equiv\partial_t\bfH$)
\be
\begin{aligned}
	&\frac{d}{dt}\left[(\bfu,\bfH)+\frac1\varpi\bfxi\cdot\bfdelta+{\frac1\tau}\omega\theta\right]+{\frac1\varpi}\bfdelta\cdot\mathbb{B}\cdot\bfdelta+\frac{k}{\tau}|\theta|^2\\ & = (\bfu,\bfH_t)-\lambda((\bfu_\eta-\bsfV)\cdot\nabla \bfu,\bfH)-(\dot{\theta}\bfu^\perp,\bfH)-2(\mathbb{D}(\bfu), \mathbb{D}(\bfH))\\ 
	&\ \quad +\frac{c\dot{V}}{\varpi}\bfdelta\cdot {\bfb}_\alpha-\frac{\dot{\theta}}{\varpi}\bfxi^\perp \cdot\bfdelta +\frac{1}{\varpi}\dot{\bfdelta}\cdot\bfxi+\frac{\theta d\dot{V}+|\dot{\theta}|^2}{\tau}\,.
\end{aligned}
\eeq{molt_H}
Multiplying both sides of \eqref{molt_H} by $2\zeta$ and summing with \eqref{molt_u}, we infer
\be
\begin{aligned}
	&\frac{d}{dt} E_\zeta+2\|\mathbb D(\bfu)\|_2^2+2\zeta\left({\frac1\varpi}\bfdelta\cdot\mathbb{B}\cdot\bfdelta+\frac{ 1}{\tau}\left(k|\theta|^2-|\dot{\theta}|^2\right)\right)\\
	&=2\zeta\left((\bfu,\bfH_t)-\lambda((\bfu_\eta-\bsfV)\cdot\nabla \bfu,\bfH)-(\dot{\theta}\bfu^\perp,\bfH)-2(\mathbb{D}(\bfu), \mathbb{D}(\bfH))\right)\\ 
	&\quad +\frac{2\zeta}{\varpi}\left( c\dot{V}\bfdelta\cdot{\bfb}_\alpha 
	-{\dot{\theta}}\bfxi^\perp \cdot\bfdelta+\dot{\bfdelta}\cdot \bfxi\right)+ \frac{2\zeta d\theta\dot{V}}{\tau}+{\frac{c\, \dot V }\varpi}\bfxi \cdot  \bfb_\alpha -\frac{V}\varpi\bfb_\alpha\cdot\B\cdot\bfdelta + \frac{d\dot\theta \dot V}\tau\,.
\end{aligned}
\eeq{Haraux}
We will proceed analyzing the terms on the right-hand side by gathering together those that, when estimated, generate similar terms. 
Once again, in what follows, the Cauchy-Schwarz and the Young inequality will be repeatedly used, together with \eqref{stime_H}. 
Using the properties of $\bfu_\eta$, \eqref{01-translation+rotation}$_6$, \eqref{3.14} and the Poincar\'e inequality on $\Omega_R$, we deduce
	$$
	\begin{aligned}
		&\left|(\bfu,\bfH_t)+\lambda ((\bfu_\eta-\bsfV)\cdot \nabla \bfu,\bfH)-(\dot{\theta}\bfu^\perp,\bfH)- \frac{\dot{\theta}}{\varpi}\bfxi^\perp\cdot \bfdelta+ \frac{1}{\varpi}\dot{\bfdelta}\cdot \bfxi\right|\\
		&\le \|\bfu\|_2\|\bfH_t\|_2+ \lambda \|\bfH\|_{\infty}\|\bfu_\eta\|_2\|\nabla \bfu\|_2+\lambda|\bfsf{V}|\|\bfH\|_2\|\nabla\bfu\|_2\\
		&\quad+ |\dot{\theta}|\|\bfu\|_2\|\bfH\|_2+\frac{1}\varpi |\dot{\theta}||\bfxi||\bfdelta|+\frac{1}{\varpi}|\dot{\bfdelta}||\bfxi|\\
		&\le c_1(|\bfdelta|+|\theta|)\|\nabla\bfu\|^2_2+ c_2\|\mathbb{D}(\bfu)\|^2_2+\frac{2|\bfxi|^2}{\varpi}+c_3|V|^2\,,
	\end{aligned}
	$$
where $c_1>0$ depends on $\Omega_0, R,\lambda,\varpi$, $c_2>0$ depends on $R,\Omega_0$,  and $c_3>0$ depends on $\alpha$. 
Then, we observe
$$
\begin{aligned}
&\left|4\zeta(\mathbb{D}(\bfu),\mathbb{D}(\bfH))+\frac{2\zeta c\dot{V}}{\varpi} \bfdelta\cdot{\bfb}_\alpha+ \frac{2\zeta d\theta\dot{V}}{\tau}-\frac{V}\varpi\bfb_\alpha\cdot\B\cdot\bfdelta+\frac{c\dot{V}}{\varpi}\bfxi\cdot {\bfb}_\alpha+\frac{d\dot{\theta}\dot{V}}{\tau}\right|\\ 
&\le  
\frac{1}2\|\mathbb{D}(\bfu)\|^2_2+{c_4 \zeta}(|\bfdelta|^2+|\theta|^2)+c_5(|\dot{V}|^2+|V|^2)\,,
\end{aligned}
$$
with $c_4>0$ depending on $R_*,k,\tau,\varpi$, and $c_5>0$ depending on $\mathbb{A},\alpha,k,\tau,R_*,\varpi,\Omega_0$. Finally, we estimate the terms that are not pre-multiplied by $\zeta$ and, using \eqref{3.14},  we infer
$$
\begin{aligned}
\left|\frac{c\dot{V}}{\varpi}\bfxi\cdot {\bfb}_\alpha+\frac{d\dot{\theta}\dot{V}}{\tau}\right|\le \frac{1}2\|\mathbb{D}(\bfu)\|^2_2 +{c_6}|\dot{V}|^2\,,
\end{aligned}
$$
where $c_6>0$ depends on $\varpi,\tau,\Omega_0,\alpha$. 
From \eqref{B_bound}-\eqref{3.14} and the above manipulations, we deduce that there exists $\zeta_2\in (0,\zeta_1)$, depending on 
$\tau,\Omega_0,\varpi,\mathbb{A},R$ such that, if $\zeta\le \zeta_2$, then 
\be
\frac{d}{dt} E_\zeta+\frac{1}2\|\mathbb D(\bfu)\|_2^2+{\frac\zeta\varpi}\bfdelta\cdot\mathbb{B}\cdot\bfdelta+\frac{\zeta k}{\tau}|\theta|^2\le c_1  \zeta (|\bfdelta|+|\theta|)\|\nabla\bfu\|^2_2+c_6(|\dot{V}|^2+|V|^2 )\,.
\eeq{E_zeta}
We next claim that there exists $\zeta_0\in (0,\zeta_2)$ such that, if $E_{\zeta_0}(0)\le \rho_0$, then $E_{\zeta_0}(T)\le \rho_0$ for all $\zeta\le \zeta_0$. To prove this claim, we impose that $E_{\zeta_0}(0)\le \rho_0$ with 
$\rho_0\ge \mathcal{V}$.
Then, \eqref{energie}$_1$ and \eqref{E_zeta} implies that 
$$
E(t)\le E(0)e^{C_1T}+C_2 e^{C_1T}\mathcal{V}\le e^{C_1T}\left(2+C_2\right)\rho_0\qquad \forall\, t\in [\sigma,T]\,,
$$
by which we deduce, in view of \eqref{B_bound}, that
\be
|\bfdelta(t)|+|\theta(t)|\le  e^{\tfrac{C_1T}2}{\left(4+2C_2\right)}^{1/2}\left[\left(\frac{\tau}k\right)^{1/2}+\left(\frac{\varpi}{\rho_1}\right)^{1/2}\right]\sqrt{\rho_0}  =: e^{\tfrac{C_1T}2}\,c_7\sqrt{\rho_0}\qquad \forall\,t\in [\sigma,T]\,,
\eeq{delta-theta}
where $c_7>0$ thus depends on $\tau,k,\varpi, \mathbb{A},\Omega_0$. Therefore, plugging the above bound in \eqref{E_zeta} and using \eqref{3.14} again, we end up with
\be
\frac{d}{dt} E_{\zeta_0}+\frac{1}4\|\mathbb D(\bfu)\|_2^2+{\frac{\zeta_0}\varpi}\bfdelta\cdot\mathbb{B}\cdot\bfdelta+\frac{\zeta_0 k}{\tau}|\theta|^2\le c_6(|\dot{V}|^2+|V|^2 )\,,
\eeq{E_zeta_new_2}
where
\be
\zeta_0:=\frac{1}{8 c_8}\rho_0^{-\tfrac{1}{2}}\,e^{-\tfrac{C_1T}2}\,,
\eeq{zeta3}
and $c_8>0$ depends on $\Omega_0, R,\lambda,\tau,k,\varpi, \mathbb{A}$. Using \eqref{3.14}-\eqref{E_zeta_dis}-\eqref{zeta3} and the Poincar\'e inequality on $\Omega_R$, i.e. $\|\nabla\bfu\|_2\ge c_9R^{-1}\|\bfu\|_2$, where $c_9>0$, we conclude that
\be
\begin{aligned}
&	\frac{1}{4}\|\mathbb{D}(\bfu)\|_2^2+\zeta_0{\frac1\varpi}\bfdelta\cdot\mathbb{B}\cdot\bfdelta+\zeta_0\frac{ k}{\tau}|\theta|^2\\&\ge\frac{\zeta_0}{2}\left(c_8e^{\tfrac{C_1T}2}\rho_0^{\tfrac{1}2}\left(\frac{c_9^2}{R^2}\|\bfu\|^2_2+\frac{\kappa\varpi}{2}\frac{|\bfxi|^2}{\varpi}+\frac{\kappa \tau}{2}\frac{|\dot{\theta}|^2}{\tau}\right)+2\frac{1}{\varpi}\bfdelta\cdot\mathbb{B}\cdot\bfdelta+2\frac{k|\theta|^2}\tau\right)\ge\zeta_0 E\ge \frac{2}{3}\zeta_0\ E_{\zeta_0}\,,
\end{aligned}
\eeq{PC}
provided that
$$
\rho_0\ge \frac{1}{(c_8)^2}\left(\frac{R^4}{c_9^4}+\frac{4}{\kappa^2\varpi^2}+\frac{4}{\kappa^2\tau^2}\right)\,.
$$
Replacing \eqref{PC} in \eqref{E_zeta_new_2} entails that 
\be
\frac{d}{dt}E_{\zeta_0}+\frac{2\zeta_0}{3}E_{\zeta_0}\le c_{10}(|\dot{V}|^2+|V|^2)\qquad\forall \,t\in [\sigma,T]\,,
\eeq{integraE_zeta3}
with $c_{10}>0$ depending on $R,\Omega_0,\varpi,\tau,\mathscr{B},\alpha,\mathbb{A},k$. Integrating the above inequality between $\sigma$ and $t\in (\sigma,T]$, we deduce that
\be
E_{\zeta_0}(t)\le E_{\zeta_0}(\sigma)\exp\left(\tfrac{2}3\zeta_0(\sigma-t)\right)+c_{10}\mathcal{V}\,.
\eeq{Ezet0}
Since, by Lemma \ref{Lemma_glob_ex}, we easily show that 
\be
\lim_{\sigma\to0} E_{\zeta_0}(\sigma)=E_{\zeta_0}(0)\,,
\eeq{E_0}
from \eqref{Ezet0} we deduce
$$
E_{\zeta_0}(t)\le E_{\zeta_0}(0)\exp\left(-\tfrac{2}3\zeta_0 t \right)+c_{10}\mathcal{V}\,,\ \ \mbox{for all $t\in [0,T]$}\,. 
$$
From this it follows that $E_{\zeta_0}(T)\le \rho_0$  if $\rho_0$ satisfies 
\be
\rho_0\ge \frac{c_{10}\mathcal{V}}{1-\exp(-\tfrac{2}3\zeta_0T)}\,.
\eeq{condition_rho0}
Condition \eqref{condition_rho0} is guaranteed provided that $\rho_0$ is greater than some quantity depending on $T,\mathcal{V}, c_8,c_{10}$. Indeed, if we put $x:=T/(12c_8) \rho_0^{-\tfrac{1}2}e^{-\tfrac{C_1T}2}$, in view of \eqref{zeta3}, we have that \eqref{condition_rho0} is equivalent to 
$$
\frac{x^2}{1-e^{-x}}\le \frac{1}{c_{10}\mathcal{V}}\frac{e^{-C_1T}T^2}{(12c_8)^2}\,,
$$
which turns out to be true if $x$ is less than some quantity depending on $T,\mathcal{V}, c_8,c_{10}$. Finally, \eqref{claim2} is proved by integrating \eqref{integraE_zeta3} and using \eqref{E_0}. \end{proof}
\section{Proof of Theorem \ref{main:th}}\label{sec:PMR}
In this section we shall furnish a proof of our main result. It will develop in two steps. In the first, we establish the existence of weak $T$-periodic solutions in the bounded domain $\Omega_R$, for any arbitrary and sufficiently large $R$, with corresponding relevant estimates involving constants independent of $R$. In the second step, we combine this result with the classical ``invading domain" procedure that will lead to a complete proof of Theorem \ref{main:th}.  
\subsection{Step $1$: Existence of $T$-periodic weak solutions in $\Omega_R$}\label{Sec:VeNa}
Thanks to the results established in Lemmas \ref{Lemma_glob_ex}  and \ref{Lemma_dissipation}, we are in a position to prove the existence of a $T$-periodic weak solution with corresponding uniform estimates in the bounded domain $\Omega_R$: 
\smallskip\par\noindent
\be\left\{\ba{l}\medskip\left.\ba{r}\medskip
\lambda\left(\partial_t\bfu+(\bfu-\bsfV)\cdot\nabla\bfu+\dot{\theta}\bfu^\perp\right)= \Div\mathbb T(\bfu,p)\\
\Div\bfu=0\ea\right\}\ \ \mbox{in $\Omega_R\times\real$}\,,\\ \medskip
\ \ \bfu({\bfx},t)=\bsfV({\bfx},t):=\bfxi(t)+\omega(t){\bfx}^\perp\,, \ \mbox{at $\partial\Omega_0\times\real$}\,;\ \\ \medskip
\ \ \bfu(\bfx,t)={\bf0}\,,\ \mbox{at $\partial B_R\times\real$}
\\ \bfu(\bfx,0)=\bfu_0\ \mbox{in $\Omega_R$}
\\
\medskip\left.\ba{r}\medskip
\dot{\bfxi}+\dot\theta\bfxi^\perp+\mathbb B(t)\cdot\bfdelta+\varpi\Int{\partial\Omega_R}{}\mathbb T(\bfu,p)\cdot\bfn= c\, \dot V(t) \bfb_\alpha(t)
\\ \medskip
\dot{\bfdelta}+\dot\theta\bfdelta^\perp={\bfxi-V(t)\bfb_\alpha(t)}\\ \medskip
\dot{\omega}+k\theta+\tau\,\bfe_1\cdot\Int{\partial\Omega_R}{}\bfx\times\mathbb T(\bfu,{ p})\cdot\bfn=d\, \dot V(t)\, \\
\medskip
\dot{\theta}={\omega}
\ea\right\}\ \ \mbox{in $\real$}\,.

\ea\right.
\eeq{01-translation+rotation_bounded}
Such weak solution is defined exactly as in Definition \ref{definition_weak}, once we replace $\mathbb{R}^3$ and $\Omega$ by $B_R$ and $\Omega_R$ respectively, that is:
\begin{definition}\label{definition_weak_OmegaR}
	The quintuple $(\bfu,\bfxi,\bfdelta,\omega,\theta)$ is a {\textit{$T$-periodic weak solution}} to \eqref{01-translation+rotation_bounded} if 
	\begin{enumerate}[(i)]
		\item $\bfu\in L^2(0,T;\mathcal{D}^{1,2}(B_R))$, with $\bfu(\bfx,t)|_{\partial \Omega}=\bfxi(t)+\omega(t)\bfx^\perp$, a.a. $t\in [0,T]$, $\bfxi\in L^2(0,T;\mathbb{R}^3)$, $\omega\in L^2(0,T;\mathbb{R})$.
		\item $\bfdelta\in H^1(0,T;\mathbb{R}^3)$, $\theta\in H^1(0,T;\mathbb{R})$;
		\item $(\bfu,\bfxi,\bfdelta,\omega,\theta)$ satisfies the following equations (with $(\cdot,\cdot)\equiv(\cdot,\cdot)_{\Omega_R}$ and $\langle\cdot,\cdot\rangle\equiv \langle\cdot,\cdot\rangle_{B_R} $)
		\be
		\begin{aligned}
			\int_0^T&\bigg[-\langle\bfu,\bfphi_t\rangle+\lambda((\bfu-\bsfV)\cdot\nabla\bfu+\dot{\theta}\bfu^{\perp},\bfphi)+{\frac1\varpi}\dot{\theta}\bfxi^{\perp}\cdot\hat{\bfrho}\\&
			+2(\mathbb{D}(\bfu),\mathbb{D}(\bfphi))
			+{\frac1\varpi}\hat{\bfrho}\cdot\mathbb{B}\cdot\bfdelta+{\frac{\hat{\alpha}k\theta}\tau}-{\frac1\varpi}\hat{\bfrho}\cdot c\dot{V}{\bfb}_\alpha-{\frac{\hat{\alpha}d\dot{{ V}}}\tau}\bigg]\,{\rm d}t=0\,,\\&\dot{\bfdelta}+\dot\theta\bfdelta^\perp={\bfxi-V(t)\bfb_\alpha(t)}\,,\quad\int_0^T(\bfxi(t)-V(t){\bfb}_\alpha(t)-\dot{\theta}(t)\bfdelta^\perp(t))\,{\rm d}t =0\,,\\&\dot{\theta}=\omega,\quad \int^T_0\omega(t)\,{\rm d}t=0\,,
		\end{aligned}
	\eeq{periodic_weak_bounded}
		for all $\bfphi\in\mathcal{C}_{\sharp}(B_R)$.
	\end{enumerate}
\end{definition}
We are then ready to prove the following existence result.
\begin{proposition}\label{prop:weak_per_bounded}
	Let $V\in H^1(0,T;\mathbb{R})$. Then, for any $R>3R_*$, there is at least one $T$-periodic weak solution to \eqref{01-translation+rotation_bounded}. This solution satisfies the estimates 

\be
\begin{array}{ll}\smallskip
\|\bfxi\|_{L^2}^2+\|\omega\|_{L^2}^2 + \Int0T\|\nabla \bfu(t)\|_{2,\Omega_R}^2\,{\rm d}t\le C\,\mathcal{V}\,,
\\ \|\bfdelta\|_{L^2}^2+\|\theta\|_{L^2}^2 \le C\mathcal{V}\,\left(T^2+{\mathcal{V}}+\Frac{\mathcal{V}^2}{T^{1/2}}+\Frac{\mathcal{V}^3}T\right)\,,
\end{array}
	\eeq{bound_Rindependent}
and 
\be
\|\bfdelta\|_{L^\infty}+\|\theta\|_{L^\infty} \le 
C\mathcal{V}^{1/2} \left(T^{1/2}+T\mathcal{V}^{1/2}+\mathcal{V}+\frac{\mathcal{V}^{3/2}}{T^{1/4}}+\frac{\mathcal{V}^{1/2}+\mathcal{V}^{2}}{T^{1/2}}+\frac{\mathcal{V}}{T^{3/4}}+\frac{\mathcal{V}^{3/2}}T\right)\,.
\eeq{bound_point_two}
	with $C>0$ independent of $R$ and $T$. Moreover, given $R_0>3R_*$, there exists $C_1>0$ depending on $R_0$, but independent of $R$, such that
	\be
	\begin{aligned}
			\int_0^T\|\bfu(t)\|^2_{2,\Omega_{R_0}}\le C_1 \mathcal{V}\,, \qquad \left\|\frac{\partial \bfu}{\partial t}\right\|_{L^1(0,T;\mathcal{D}_0^{-1,2}(\Omega_{R_0}))}\le C_1\left({\mathcal{V}}^{1/2}+\mathcal{V}\right)\,.
	\end{aligned}
	\eeq{bound_2}
\end{proposition}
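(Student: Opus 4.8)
The plan is to carry out the two-step scheme announced at the end of Section~\ref{Sec:main}: first produce a $T$-periodic weak solution in $\Omega_R$ by a fixed-point argument, then extract the $R$-independent (and, for \eqref{bound_Rindependent}--\eqref{bound_point_two}, also $T$-independent) estimates.

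\textbf{Existence.} I would work on the Hilbert space $\mathcal E$ of admissible initial data $(\bfu_0,\bfdelta_0,\theta_0)$ — with $\bfu_0\in\mathcal H(B_R)$ encoding the body velocity $(\bfxi_0,\omega_0)$ as its rigid value on $\Omega_0$ — equipped with the norm $E(0)^{1/2}$, and define the Poincaré map $\mathsf P:\mathcal E\to\mathcal E$, $\mathsf P(\bfu_0,\bfdelta_0,\theta_0):=(\bfu(T),\bfdelta(T),\theta(T))$, where $(\bfu,\bfxi,\bfdelta,\omega,\theta)$ is the unique solution of \eqref{01-translation+rotation}--\eqref{01-translation+initial-cond} furnished by Lemma~\ref{Lemma_glob_ex}; its continuity on bounded sets is exactly the continuous dependence in the $E^{1/2}$-norm proved there. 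Fixing $\rho_0,\zeta_0$ as in Lemma~\ref{Lemma_dissipation}, the sublevel set $\mathcal B:=\{E_{\zeta_0}\le\rho_0\}$ is a closed, bounded, convex ellipsoid, since $E_{\zeta_0}$ is a positive definite quadratic form equivalent to $E$ by \eqref{E_zeta_dis}, and Lemma~\ref{Lemma_dissipation} states precisely that $\mathsf P(\mathcal B)\subseteq\mathcal B$. Finally, by the smoothing estimate in Lemma~\ref{Lemma_glob_ex}(iv) (used with $\sigma=T/2$) together with \eqref{final2} and \eqref{3.25}, $\mathsf P(\mathcal B)$ is bounded in $\mathcal D^{1,2}(B_R)\times\mathbb R^3\times\mathbb R$, hence relatively compact in $\mathcal E$ by Rellich's theorem. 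Schauder's fixed point theorem then yields a fixed point of $\mathsf P$; by uniqueness the associated solution is $T$-periodic, i.e. a weak solution in the sense of Definition~\ref{definition_weak_OmegaR}.

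\textbf{The estimates \eqref{bound_Rindependent}.} For the periodic solution — all of whose components are $T$-periodic and which, by Lemma~\ref{Lemma_glob_ex}(i)--(ii) and periodicity, is regular enough on all of $[0,T]$ to be tested against $\bfu$ — I would integrate the energy identity over $[0,T]$, so that $\tfrac{d}{dt}E$ drops out, and use \eqref{3.14}, Cauchy--Schwarz and Young to obtain $\int_0^T(\|\nabla\bfu\|_2^2+|\bfxi|^2+|\omega|^2)\,{\rm d}t\le C\mathcal V+C\mathcal V^{1/2}\|\bfdelta\|_{L^2}$, the last term coming from $\int_0^T\tfrac V\varpi\,\bfb_\alpha\cdot\mathbb B\cdot\bfdelta$. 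To close this I would insert into \eqref{periodic_weak_bounded}$_1$ the \emph{time-independent} test function $\bfphi^*\in\mathcal C(B_R)$ whose rigid value near $\Omega_0$ equals $\bar\bfdelta+\bar\theta\,\bfx^\perp$, with $\bar\bfdelta:=\tfrac1T\int_0^T\bfdelta$, $\bar\theta:=\tfrac1T\int_0^T\theta$, taken as a fixed linear combination of solenoidal, compactly supported extensions of $\bfe_i$ and $\bfx^\perp$ so that $\|\bfphi^*\|_{W^{1,\infty}}\le C(|\bar\bfdelta|+|\bar\theta|)$ with $C$ geometric. Since $\bfphi^*_t=0$ and $\int_0^T\dot\bfxi=\int_0^T\dot\omega=0$, most terms of \eqref{periodic_weak_bounded}$_1$ collapse; writing $\bfdelta=\bar\bfdelta+(\bfdelta-\bar\bfdelta)$ and using $\int_0^T\mathbb B(t)\,{\rm d}t\ge\rho_1 T\,\mathbb I$ from \eqref{B_bound}, the left-hand side is bounded below by $c_0T(|\bar\bfdelta|^2+|\bar\theta|^2)$, while the right-hand side is controlled by the data, the kinetic quantity above, and $\|\bfdelta-\bar\bfdelta\|_{L^2}$ — the convective and gyroscopic contributions, localized in the fixed ball $B_{2R_*}$ by the support of $\bfphi^*$, being estimated via $\|\bfu\|_{L^3(B_{2R_*}\cap\Omega)}\le C(\|\nabla\bfu\|_2+|\bfxi|+|\omega|)$, which follows by subtracting a compactly supported extension of $\bfxi+\omega\bfx^\perp$ and applying the Sobolev inequality $\|\bfv\|_{L^6(\mathbb R^3)}\le C\|\mathbb D(\bfv)\|_2$ in $\mathcal D^{1,2}$. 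Coupling this with the Poincaré--Wirtinger inequalities $\|\bfdelta-\bar\bfdelta\|_{L^2}\le\tfrac T\pi\|\dot\bfdelta\|_{L^2}$, $\|\theta-\bar\theta\|_{L^2}\le\tfrac T\pi\|\omega\|_{L^2}$ (with $\dot\bfdelta=\bfxi-V\bfb_\alpha-\omega\bfdelta^\perp$, the quadratic term $\|\omega\,\bfdelta\|_{L^2}\le\|\omega\|_{L^2}\|\bfdelta\|_{L^\infty}$ absorbed through $\|\bfdelta\|_{L^\infty}^2\le T^{-1}\|\bfdelta\|_{L^2}^2+2\|\bfdelta\|_{L^2}\|\dot\bfdelta\|_{L^2}$ and Young) and with the kinetic estimate produces a closed system of inequalities in $\|\bfdelta\|_{L^2},\|\theta\|_{L^2},\|\dot\bfdelta\|_{L^2},|\bar\bfdelta|+|\bar\theta|$ and the kinetic quantity, whose resolution gives \eqref{bound_Rindependent} with the displayed powers of $T$ and $\mathcal V$ and a constant free of $R$ and $T$.

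\textbf{Pointwise and local bounds.} The $L^\infty$ estimate \eqref{bound_point_two} follows from $\|\bfdelta\|_{L^\infty}\le|\bar\bfdelta|+\|\bfdelta-\bar\bfdelta\|_{L^\infty}\le|\bar\bfdelta|+T^{1/2}\|\dot\bfdelta\|_{L^2}$ (and likewise for $\theta$) after substituting the bounds just obtained. For \eqref{bound_2}, decomposing $\bfu$ on $\Omega_{R_0}$ as a compactly supported extension of $\bfxi+\omega\bfx^\perp$ plus a remainder in $\mathcal D^{1,2}$ and using $\mathcal D^{1,2}\hookrightarrow L^6$ gives $\|\bfu(t)\|_{2,\Omega_{R_0}}\le C(R_0)(\|\nabla\bfu(t)\|_2+|\bfxi(t)|+|\omega(t)|)$, which integrated in time yields the first inequality via \eqref{bound_Rindependent}$_1$; the bound on $\partial_t\bfu$ in $L^1(0,T;\mathcal D_0^{-1,2}(\Omega_{R_0}))$ comes from testing the fluid equation against arbitrary $\bfpsi\in\mathcal D_0^{1,2}(\Omega_{R_0})$ — so that pressure and surface integrals vanish — bounding the viscous, convective and gyroscopic terms by $\|\nabla\bfu\|_2$, $(\|\nabla\bfu\|_2+|\bfxi|+|\omega|)^2$, etc., and integrating in time with Hölder and \eqref{bound_Rindependent}$_1$. \emph{The main obstacle} is the middle step: obtaining an estimate for $\bfdelta,\theta$ that does not degenerate as $T$ approaches a natural frequency. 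One cannot simply integrate the exponential-decay inequality \eqref{integraE_zeta3}, whose rate $\zeta_0$ depends badly on $T$ and $\mathcal V$; the device of testing with $\bfphi^*$ and then untangling the resulting coupled nonlinear system of inequalities, keeping every constant clean of $R$ and $T$, is where the real work lies.
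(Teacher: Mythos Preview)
Your existence argument via the Poincar\'e map and Schauder's theorem is correct and matches the scheme the paper imports from \cite{BoGa}. The sketches for \eqref{bound_2} and for \eqref{bound_point_two} (once \eqref{bound_Rindependent} is in hand) are also fine and agree with the paper.

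The substantive difference is in how you get \eqref{bound_Rindependent}$_2$. You test with a \emph{time-independent} $\bfphi^*$ whose rigid part is $\bar\bfdelta+\bar\theta\,\bfx^\perp$, and then plan to close a coupled nonlinear system. The paper instead treats $\bar\theta$ and the translational mean separately; for $\bar\theta$ it uses your very $\bfG$, but for the translation it passes to the \emph{inertial-frame} displacement $\bfchi:=\mathbb Q(\theta)\cdot\bfdelta$ and tests with the \emph{time-dependent} field $\bfI$ whose rigid value is $\mathbb Q^\top(\theta(t))\cdot\bar\bfchi$. Because $\mathbb B=\mathbb Q^\top\mathbb A\,\mathbb Q$, the elastic term then collapses to exactly $\tfrac{T}{\varpi}\bar\bfchi\cdot\mathbb A\cdot\bar\bfchi$, and the gyroscopic contribution $\tfrac{\dot\theta}{\varpi}\bfxi^\perp\cdot\hat\bfrho$ cancels against $-\tfrac1\varpi\bfxi\cdot\dot{\hat\bfrho}$ via the identity $\dot\theta\,\bfa^\perp\cdot(\mathbb Q^\top\bfb)=\bfa\cdot\tfrac{d}{dt}(\mathbb Q^\top)\bfb$. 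Most importantly, $\dot\bfchi=\bfgamma-V\tilde\bfb_\alpha$ is \emph{linear} (no $\omega\bfdelta^\perp$), so Poincar\'e--Wirtinger gives $\|\bfchi-\bar\bfchi\|_{L^2}\le CT(\|\bfxi\|_{L^2}+\mathcal V^{1/2})$ directly, and since $|\bfchi|=|\bfdelta|$ this yields \eqref{bound_Rindependent}$_2$ without any bootstrap.

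Your route is not obviously closable: keeping $\bar\bfdelta$ forces you through $\dot\bfdelta=\bfxi-V\bfb_\alpha-\omega\bfdelta^\perp$, and the bound $\|\omega\,\bfdelta\|_{L^2}\le\|\omega\|_{L^2}\|\bfdelta\|_{L^\infty}$ reinserts $\|\bfdelta\|_{L^\infty}$ into $\|\dot\bfdelta\|_{L^2}$, which in turn feeds back into $|\bar\bfdelta|$, $\|\bfdelta\|_{L^2}$ and (via your kinetic inequality $K\le C\mathcal V+C\mathcal V^{1/2}\|\bfdelta\|_{L^2}$) into $K$. Untangling this by Young leaves a factor of the form $1-C\,T^{1/2}\mathcal V^{1/2}$ on the left, i.e.\ an implicit smallness assumption on $T\mathcal V$ that the proposition does \emph{not} allow. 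The paper's back-rotation to $\bfchi$ is precisely the device that removes this circularity; without it, the step you rightly identify as ``where the real work lies'' does not go through for arbitrary $T$ and $\mathcal V$.
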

\begin{proof}
With the exception of \eqref{bound_Rindependent}$_2$ and \eqref{bound_point_two}, the proof of the other statements follows by arguing exactly as in the proof of \cite[Proposition 3.1]{BoGa}. We shall therefore omit it.
In order to show \eqref{bound_Rindependent}$_2$, we begin to set 
	$$
	\bar{w}:=\frac{1}{T}\int_0^Tw(t)\,{\rm d}t\,\qquad \forall\,w \in H^1(0,T)\,.
	$$
Then,  we introduce
	$$
	\begin{aligned}
		&{\sf G}:=-\tfrac{x_3^2}{2}\bar{\theta}\bfe_1+x_2x_1\bar{\theta}\bfe_2\,,\\& {\sf I}:=(\cos \theta\bar{\chi}_{2}+\sin\theta \bar{\chi}_{3})x_3\bfe_1+(-\sin \theta\bar{\chi}_{2}+\cos\theta \bar{\chi}_{3})x_1\bfe_2+
		x_2\bar{\chi}_{1}\bfe_3\,.
	\end{aligned}
	$$
where   $\bfchi:=\mathbb{Q}(\theta)\cdot\bfdelta$.
It is thus clear that the vector fields
	$$
	\bfG(\bfx):=\text{curl}(\psi(|\bfx|){\sf G}(\bfx)) \,,\qquad \bfI(\bfx,t):=\text{curl}(\psi(|\bfx|){\sf I}(\bfx,t))\qquad \forall\,(\bfx,t)\in\mathbb{R}^3\times(0,\infty)
	$$
with  $\psi$ the cut-off function given in Section \ref{Sec:Haraux_trick}, 
can be taken as  test functions in the weak formulation \eqref{periodic_weak_bounded}. Furthermore, they satisfy
	\be
	\begin{aligned}
		&\text{supp }\bfG\subset B_{2R_*}\,,\quad\bfG(\bfx)=\bar{\theta}\bfx^\perp \quad \bfx\in \partial \Omega_0\,,	\\&
		\text{div} \,\bfG(\bfx)=0 \quad \bfx\in \Omega_{R}\,, \quad \sup_{\bfx\in \Omega_{R}}|\bfG(\bfx)|\le c_1|\bar{\theta}|\,,
	\end{aligned}
	\eeq{stime_G}
	and, for all fixed $t\in (0,\infty)$
	\be
	\begin{aligned}
		&\text{supp }\bfI\subset B_{2R_*}\,,\quad\bfI(\bfx,t)=\mathbb{Q}(\theta)^\top\cdot \bar{\bfchi} \quad \bfx\in \partial \Omega_0, 	\\
		\text{div} \,\bfI(\bfx,t)=0 &\quad \bfx\in \Omega_{R}\,, \quad \sup_{\bfx\in \Omega_{R}}|\bfI(\bfx,t)|\le c_1|\bar{\bfchi}|\,,\quad \sup_{\bfx\in \Omega_{R}}|\partial_t\bfI(\bfx,t)|\le c_1|\dot{\theta}||\bar{\bfchi}|\,,
	\end{aligned}
	\eeq{stime_I}
	for some some $c_1>0$ depending on $R_*$. Choosing $\bfphi=\bfG$ in \eqref{periodic_weak_bounded}$_1$, we infer that 
	$$
	\begin{aligned}
		\int_0^T&\bigg[\lambda((\bfu-\bsfV)\cdot\nabla\bfu+\dot{\theta}\bfu^{\perp},\bfG)
		+2(\mathbb{D}(\bfu),\mathbb{D}(\bfG))+{\frac{k\bar{\theta}\theta}\tau}+{\frac{\bar{\theta}d\dot{{ V}}}\tau}\bigg]\,{\rm d}t=0\,,
	\end{aligned}
	$$
	which in turn, recalling the definition of $\bar{\theta}$, furnishes
	\be
	{\frac{k|\bar{\theta}|^2T}{\tau}}=-{\frac{\bar{\theta}d}\tau}\int_0^T\dot{V}\,{\rm d}t-\int_0^T\left[2(\mathbb{D}(\bfu),\mathbb{D}(\bfG))+\lambda((\bfu-\bsfV)\cdot\nabla\bfu+\dot{\theta}\bfu^{\perp},\bfG)\right]\,{\rm d}t\,.
	\eeq{thetabar}
	We can then estimate the right-hand side of \eqref{thetabar}, using H\"older inequality, \eqref{3.14}-\eqref{bound_Rindependent}-\eqref{stime_G}, and the Poincar\'e inequality in $B_{2R*}$, so to have 
	$$
	\begin{aligned}
		{\frac{k|\bar{\theta}|^2T}{\tau}} &\le|\bar{\theta}| \mathcal{V}\left({\frac{d}\tau}T^{1/2}+CT^{1/2}+C\lambda \mathcal{V}\right)
	\end{aligned}
	$$
	for some $C>0$ independent of $R$. This implies that 
	\be
	|\bar{\theta}|\le C{\mathcal{V}}\left(\frac{1}{T^{1/2}}+\frac{ \mathcal{V}}{T}\right)\,.
	\eeq{theta_barC1}
 On the other hand, from Poincar\'e-Wirtinger inequality and \eqref{bound_Rindependent}$_1$ we get
$$
\int_0^T|\theta|^2\,{\rm d}t-|\bar{\theta}|^2T=\int_0^T|\theta-\bar{\theta}|^2\,{\rm d}t\le T^2\int_0^T|\omega|^2\,{\rm d}t\le T^2 C\mathcal{V}\,,
$$
which together with \eqref{theta_barC1} gives the bound for $\theta$ in \eqref{bound_Rindependent}$_2$.  

The bound for  $\bfdelta$ is derived in a similar way. We choose $\bfphi=\bfI$ in \eqref{periodic_weak_bounded}$_1$ and  obtain 
	\be
	\begin{aligned}
		\int_0^T&\bigg[-\lambda(\bfu,\partial_t\bfI)+\lambda((\bfu-\bsfV)\cdot\nabla\bfu+\dot{\theta}\bfu^{\perp},\bfI)
		+2(\mathbb{D}(\bfu),\mathbb{D}(\bfI))
		+{\frac1\varpi}\mathbb{Q}^\top(\theta)\cdot\bar{\bfchi}\cdot\mathbb{B}\cdot\bfdelta\\&\quad+{\frac{c\,\dot{V}}\varpi}\mathbb{Q}^\top(\theta)\cdot\bar{\bfchi}\cdot {\bfb}_{\alpha}\bigg]\,{\rm d}t=0\,,
	\end{aligned}
	\eeq{id_chi}
	where we have used the identity 
	$$
	\dot{\theta}\bfa^\perp\cdot (\mathbb{Q}^\top(\theta)\cdot\bfb)=\bfa\cdot\frac{d}{dt}(\mathbb{Q}^\top(\theta))\cdot \bfb
	$$
	for any $\bfa,\bfb\in\mathbb{R}^3$. Keeping in mind the definition of $\mathbb{B}$, ${\bfb}_\alpha$ and $\bfchi$  from \eqref{id_chi}, we infer that
	$$
	\begin{aligned}
		{\frac{T}\varpi}\bar{\bfchi}\cdot\mathbb{A}\cdot \bar{\bfchi} =&-{\frac{c}\varpi}\left(\int_0^T\dot{V}\,{\rm d}t\right)\, \bar{\bfchi}\cdot \bfb_{\alpha}+\int_0^T\lambda(\bfu,\partial_t\bfI)\,{\rm d}t
		\\&-\int_0^T \left[2(\mathbb{D}(\bfu),\mathbb{D}(\bfI))+\lambda((\bfu-\bsfV)\cdot\nabla\bfu+\dot{\theta}\bfu^{\perp},\bfI)\right]\,{\rm d}t\,.
	\end{aligned}
	$$
By the same arguments used to estimate the right-hand side of \eqref{thetabar}, we have
	$$
	\begin{aligned}
		{\frac{T\rho}\varpi}|\bar{\bfchi}|^2&\le|\bar{\bfchi}|\mathcal{V} \left({\frac{d}\varpi}T^{1/2}+CT^{1/2}+C\lambda \mathcal{V}\right)\,,
	\end{aligned}
	$$
so that $\bar{\bfchi}$ satisfies an estimate similar to \eqref{theta_barC1}.  We next observe that
$$
\dot{\bfchi}=\dot{\mathbb Q}\cdot\bfdelta+\mathbb Q\cdot\dot{\bfdelta}=\mathbb Q\cdot\big[ (\mathbb Q^\top\cdot\dot{\mathbb Q})\cdot\bfdelta+\dot{\bfdelta}\big]\,.
$$
Since, by \eqref{Q}, it results that $(\mathbb Q^\top\cdot\dot{\mathbb Q})\cdot\bfdelta=\dot{\theta}\bfdelta^\perp$, from the previous relation,  \eqref{A}$_1$ and \eqref{periodic_weak_bounded}$_2$, we deduce $\dot{\bfchi}={\bfgamma}-V\bfb_\alpha$, with
	$
	\bfgamma(t)=\mathbb{Q}(\theta(t))\cdot\bfxi(t)\,.
	$ Since $\mathbb{Q}$ is an isometry, we have that $|\bfchi|^2=|\bfdelta|^2$ and $|\bfgamma|^2=|\bfxi|^2$. Then, the estimate of $\bfdelta$ in \eqref{bound_Rindependent}$_2$ follows in the same way as the one for $\theta$.
\par 
We shall next show \eqref{bound_point_two}. We prove only the estimate of $\bfdelta$, as the analogous bound on $\theta$ can be obtain from an entirely similar argument. We begin to observe that 
	$$
	\bfdelta(t)=\bfdelta(s)+\int_s^t\dot{\bfdelta}(\tau)\,d\tau \qquad \forall\, s,t\in [0,T]\,.
	$$  
	Thus, integrating with respect to the variable $s$ between $0$ and $T$ and using Schwarz inequality, we infer that 
	\be
	|\bfdelta(t)|\le \frac{1}T\int_0^T|\bfdelta(s)|\,ds+\int_0^T|\dot{\bfdelta}(\tau)|\,d\tau\le \frac{1}{T^{1/2}}\|\bfdelta\|_{L^2(0,T)}+ \int_0^T|\dot{\bfdelta}(\tau)|\,d\tau\,.
	\eeq{point_b}
	By \eqref{periodic_weak_bounded}$_3$ and a repeated use of Schwarz inequality, we get 
	$$
	\int_0^T|\dot{\bfdelta}(\tau)|\,d\tau\le \|\omega\|_{L^2(0,T)}\|\bfdelta\|_{L^2(0,T)}+T^{1/2}\|\bfxi\|_{L^2(0,T)}+T^{1/2}\mathcal{V}\,.
	$$
	Thus, plugging the above bound in the right-hand side of  \eqref{point_b} and employing \eqref{bound_Rindependent} yields \eqref{bound_point_two}.
\end{proof}

\subsection{Step $2$: Invading domains procedure and proof of Theorem \ref{main:th}}\label{Sec8}
The results accomplished in the previous subsection allow us to give a complete proof of Theorem \ref{main:th}. Indeed, let
 $\{\Omega_n\equiv\Omega_{R_n}\}$, $R_1>3R_*$, be a sequence of domains ``invading'' $\Omega$, that is 
$$
\Omega_n\subset\Omega_{n+1}\,, \quad n\in\mathbb{N}\,;\quad \bigcup_{n=1}^\infty=\Omega\,.
$$
By Proposition \ref{prop:weak_per_bounded}, on each $\Omega_n$ we infer the existence of a sequence of $T$-periodic weak solutions $\{\mathsf{p}_n\}:=\{\bfu_n,\bfxi_n,\bfdelta_n,\omega_n,\theta_n\}$ to \eqref{01-translation+rotation_bounded}. Our objective is to show that, in the limit $R_n\to\infty$, a subsequence of $\{\mathsf{p}_n\}$ converges (in suitable topology) to a weak solution in the sense of Definition \ref{definition_weak}, satisfying all properties stated in Theorem \ref{main:th}. 
For each $n$, we extend $\bfu_n$ by $\bf0$ outside $\Omega_n$, keeping the notation $\bfu_n$ for the extension. By \cite[Remark 2.1]{BoGa}, we have, on the one hand,
$$
\bfu_n(t)\in H^1(\Omega)\cap \mathcal{D}^{1,2}(\mathbb{R}^3) \qquad \text{for a.a.}\,\, t\in [0,T]\,,
$$
and, on the other hand, $\{\bfu_n,\bfxi_n,\omega_n\}$ still satisfies \eqref{bound_Rindependent}-\eqref{bound_2}. Thus, there exists a triple
$$(\bfu,\bfxi,\omega)\in L^2(0,T;\mathcal{D}^{1,2}(\mathbb{R}^3))\times L^2(0,T;\mathbb{R}^3)\times  L^2(0,T;\mathbb{R})
$$ 
such that (up to the choice of a subsequence)
\be
\begin{aligned}
&	{\bfu_n\to \bfu \qquad \text{weakly in }\,L^2(0,T;\mathcal{D}^{1,2}(\mathbb{R}^3))\,,}
\\&\bfxi_n\to \bfxi \qquad \text{weakly in }\,L^2(0,T;\mathbb{R}^3)\,, \\ &\omega_n\to \omega \qquad \text{weakly in }\,L^2(0,T;\mathbb{R})\,.
\end{aligned}
\eeq{convergenze}
In view of \eqref{bound_Rindependent}$_1$, this furnishes, in particular, that $(\bfu,\bfxi,\omega)$ satisfy \eqref{bound_weaksol}$_1$.
Since $\dot{\theta}_n=\omega_n$, the bounds in \eqref{bound_Rindependent} and the compact embedding $H^1(0,T;\mathbb{R})\subset L^2(0,T;\mathbb{R})$  imply that we can extract a further subsequence still denoted by $(\theta_n)_n$ for simplicity such that
\be
\theta_n\to \theta \qquad \text{strongly in }\,L^2(0,T;\mathbb{R})\,.
\eeq{strong_theta}
Setting $\bfchi_n=\mathbb{Q}(\theta_n)\cdot \bfdelta_n$ and 
$\bfgamma_n=\mathbb{Q}(\theta_n)\cdot\bfxi_n$, and proceeding as in the last part of the proof of Proposition \ref{prop:weak_per_bounded}, we show $\dot{\bfchi}_n=\bfgamma_n-V\bfb_\alpha$. Recall also that since $\mathbb{Q}$ is an isometry, it holds
$$
|\bfgamma_n|^2=|\bfxi_n|^2\,,\qquad |\bfchi_n|^2=|\bfdelta_n|^2.
$$
Then, the bounds in \eqref{bound_Rindependent} and the embedding $H^1(0,T;\mathbb{R}^3)\subset C([0,T];\mathbb{R}^3)$  imply that $\|\bfchi_n\|_{L^\infty(0,T)}$ is bounded independently of $n$.
Hence, again from \eqref{bound_Rindependent}, we deduce that 
\be
\begin{aligned}
\int_0^T|\dot{\bfdelta}_n|^2&=\int_0^T|\mathbb{Q}^\top(\theta_n)\cdot \dot{\bfchi}_n+\dot{\theta}_n\,\partial_{\theta}\mathbb{Q}^\top(\theta_n)\cdot {\bfchi}_n|^2\le C\left(\int_0^T|\dot{\bfchi}_n|^2+\int_0^T|\dot{\theta}_n|^2|\bfchi_n|^2\right)
\\&\le C\left(\int_0^T|\bfgamma_n|^2+\int_0^T|V|^2+\|\bfchi_n\|^2_{L^\infty(0,T)}\int_0^T|\dot{\theta}_n|^2\right)\le C\,,
\end{aligned}
\eeq{InFe}
with $C>0$ (which might change from line to line) independent of $n$. All the above,  gives us, in particular, 
\be
\bfdelta_n\to \bfdelta \qquad \text{strongly in }\,L^2(0,T;\mathbb{R})\,.
\eeq{strong_delta}
Thus, combining \eqref{strong_theta} and \eqref{strong_delta} with \eqref{bound_Rindependent}$_2$, we infer  that $(\bfdelta,\theta)$ satisfy \eqref{bound_weaksol}$_2$. Further, by \eqref{bound_point_two}, \eqref{InFe}, \eqref{convergenze}$_3$, \eqref{strong_delta}, \eqref{strong_theta} and the  compact embedding $H^1(0,T;\real^3)\subset C([0,T];\real^3)$ again, we obtain that $(\bfdelta,\theta)$ satisfy also \eqref{bound_point}. 
We now want to upgrade the convergences  in \eqref{convergenze}$_2$-\eqref{convergenze}$_3$ from weak to strong.
Fix $R_0>3R_*$ arbitrarily. By \eqref{convergenze}$_1$-\eqref{bound_2} and  Aubin-Lions-Simon theorem \cite{Simo}, we can extract another subsequence, still denoted by $\{\bfu_n\}$ such that 
\be
\bfu_n\to\bfu  \qquad \text{strongly in} \,\,L^2(0,T;L^{2}(\Omega_{R_0}))\,.
\eeq{strong_conv}
Notice that, by the divergence theorem, we have 
$$
\int_{\partial \Omega_{0}}\left|\left(\bfxi-\bfxi_n+({\omega}-{\omega}_n)\bfx^\perp\right)\cdot \bfn\right|^2d\sigma=|(\bfxi-\bfxi_n)\cdot \bfn|^2|\partial \Omega_{0}|+({\omega}-{\omega}_n)^2\int_{\partial \Omega_{0}}|\bfx^\perp\cdot\bfn|^2\,d\sigma\,.
$$
In view of \cite[Exercise II.4.1]{Gab}, denoting  the trace operator on the inner boundary by $\gamma_0:H^1(\Omega_{R_0})\to L^2(\partial \Omega_{0})$,
we infer that 
\be
C\left(|(\bfxi-\bfxi_n)\cdot \bfn|^2 + (\omega-\omega_n)^2\right)\le \|\gamma_0(\bfu-\bfu_n)\|^2_{2,\partial \Omega_{0}} \le c_{\varepsilon}\|\bfu-\bfu_n\|^2_{2,{\Omega_{R_0}}}+\varepsilon\|\nabla(\bfu-\bfu_n)\|^2_{2}\,,
\eeq{trace_ineq}
for some $C>0$,  arbitrary $\varepsilon>0$, and some $c_\varepsilon>0$. To get a similar estimate for the tangential part of $\bfxi-\bfxi_n$, we write 
\be
\|\gamma_0(\bfu-\bfu_n)\|^2_{2,\partial \Omega_{0}}=|\bfxi-\bfxi_n|^2|\partial \Omega_0|+(\omega-\omega_n)^2\int_{\partial\Omega_{0}}(x_2^2+x_3^2)d\sigma+2(\omega-\omega_n)(\bfxi-\bfxi_n)\cdot\int_{\partial\Omega_{0}}\bfx^\perp\,d\sigma
\eeq{mixed_term}
and we observe that Young's inequality implies
\be
\begin{aligned}
|\bfxi-\bfxi_n|^2|\partial \Omega_0|\le \|\gamma_0(\bfu-\bfu_n)\|^2_{2,\partial \Omega_{0}}+(\omega-\omega_n)^2\left(\frac{2}{|\partial \Omega_0|}-1\right) \int_{\partial\Omega_{0}}(x_2^2+x_3^2)d\sigma\,,
\end{aligned}
\eeq{mixed}
which, recalling \eqref{trace_ineq}, yields
\be
C|\bfxi-\bfxi_n|^2\le c_{\varepsilon}\|\bfu-\bfu_n\|^2_{2,{\Omega_{R_0}}}+\varepsilon\|\nabla(\bfu-\bfu_n)\|^2_{2}\,,
\eeq{trace_ineq2}
again for some $C>0$, arbitrary $\varepsilon>0$, and some $c_\varepsilon>0$.
Now, using \eqref{convergenze}$_1$-\eqref{strong_conv}, \eqref{trace_ineq}-\eqref{trace_ineq2} we deduce that 
\be
\bfxi_n\to \bfxi \qquad \text{strongly in} \,\,L^2(0,T;\mathbb{R}^3)\,,\qquad {\omega}_n\to{\omega} \qquad \text{strongly in} \,\,L^2(0,T;\mathbb{R})\,.
\eeq{strong_conv_obst}
Since $\overline{\omega_n} = 0$ for all $n\in\mathbb{N}$, 
in view of \eqref{convergenze}$_2$, we deduce that $\omega$ has zero average too. As a consequence of the compactness of the embedding $H^1(0,T;\mathbb{R}^3)\subset C([0,T];\mathbb{R}^3)$ and \eqref{strong_conv_obst}$_2$, 
we also infer that (up to a subsequence)
$$
\int_0^T|\dot{\theta}_n\bfdelta_n^\perp-\dot{\theta}\bfdelta^\perp|^2\,{\rm d}t\le2\|\bfdelta-\bfdelta_n\|^2_\infty\int_0^T|\dot{\theta}|^2\,{\rm d}t+2\|\bfdelta^\perp\|^2_\infty\int_0^T|\omega-\omega_n|^2\,{\rm d}t \to 0\,. 
$$
The same arguments allow us to pass to the limit in the identity 
$$
\int_0^T(\bfxi_n-V{\bfb}_{\alpha,n}-\dot{\theta}_n\bfdelta_n^\perp)\,{\rm d}t=0\,,
$$
and to deduce the periodicity of $\bfdelta$ from the fact that $\bfxi-V{\bfb}_{\alpha}-\dot{\theta}\bfdelta^\perp$ has zero time-average. 

We observe that, since we have used \eqref{strong_conv}, the converging sequences $(\bfxi_n)_n$ and $(\omega_n)_n$ may depend on $R_0$. Restricting to nested subsequences for each  $\{R_n\}$ and choosing a diagonal sequence, up to passing to a further subsequence we have that $\{\bfu_n\}$ converges in $L^2(B_{R_0})$ for \textit{all} $R_0$, and similarly all consequent properties follow. 

Finally, the proof of Theorem \ref{main:th} will be completed once we show that the limiting functions found above satisfy the weak formulation \eqref{periodic_weak}. Since $(\bfu_n,\bfxi_n,\bfdelta_n,\omega_n,\theta_n)$ satisfy \eqref{periodic_weak_bounded} and $R$ is arbitrary, for all sufficiently large $n\in\mathbb{N}$, the sequence ${\sf s}_n$ obeys
$$
	\begin{aligned}
	\int_0^T&\bigg[-\langle\bfu_n,\bfphi_t\rangle+\lambda((\bfu_n-\bsfV_n)\cdot\nabla\bfu_n+\dot{\theta}_n\bfu_n^{\perp},\bfphi)+{\frac1\varpi}\dot{\theta}_n\bfxi^{\perp}_n\cdot\hat{\bfrho}\\&
	+2(\mathbb{D}(\bfu_n),\mathbb{D}(\bfphi))
	+{\frac1\varpi}\hat{\bfrho}\cdot\mathbb{B}\cdot\bfdelta_n+{\frac{\hat{\alpha}k\theta_n}\tau}-{\frac1\varpi}c\hat{\bfrho}\cdot \dot{V}{\bfb}_{\alpha,n}-{\frac{\hat{\alpha}d\dot{{ V}}}\tau}\bigg]\,{\rm d}t=0
\end{aligned}
$$
for all $\bfphi\in\mathcal{C}_{\sharp}(\mathbb{R}^3)$. 
Then the  convergences proved in \eqref{convergenze}-\eqref{strong_theta}-\eqref{strong_delta}-\eqref{strong_conv}-\eqref{strong_conv_obst} allow to see that $\bfu,\bfxi,\bfdelta,\omega, \theta$ satisfy \eqref{bound_weaksol},  and to replace in the above identity $\bfu_n,\bfxi_n,\bfdelta_n,\omega_n$ and $\theta_n$ by $\bfu,\bfxi,\bfdelta,\omega$ and $\theta$ respectively, treating the nonlinear terms as in \cite[Proposition 3.1]{BoGa}. This concludes the proof. 

\section{The Case of a symmetric body}\label{sec:symm}
In this final section, we discuss how the method that we have presented so far in order to prove the existence of a $T$-periodic weak solution to problem \eqref{01} simplifies in the particular case where 
\be
\text{$\mathscr B$ is invariant  with respect to the group of rotations around the axis $\bfe_1$\,.}
\eeq{assumption}
The main difference with respect to the case where the axis $\bfe_1$ is \textit{not} an axis of revolution for $\mathscr B$ is that we only need the first step of the change of variables introduced in Section \ref{sec:body-fixed} to make the fluid domain time-independent. In particular, it is sufficient to attach the frame to the barycenter, 
use the change of variables and unknowns defined in \eqref{change} to obtain that $(\tilde{\bfu},\tilde{p},\tilde\bfchi,\tilde\bfgamma,\theta,\omega)$ satisfy
\be\left\{\ba{l}\medskip\left.\ba{r}\medskip
\partial_t\tilde{\bfu}+\lambda(\tilde{\bfu}-\tilde\bfgamma(t))\cdot\nabla\tilde{\bfu}=\Div \mathbb S(\tilde{\bfu},\tilde{p})\\
\Div\tilde{\bfu}=0\ea\right\}\ \ \mbox{in $\Omega\times\mathbb{R}$}\,,\\ \medskip
\ \ \bfw(\tilde{\bfx},t)=\tilde\bfgamma(t)+\omega(t)\tilde{\bfx}^\perp\,, \ \mbox{at $\partial\Omega_0\times\mathbb{R}$}\,;\ \\ \medskip
\ \ \Lim{|\tilde{\bfx}|\to\infty}\bfw(\tilde{\bfx},t)={\bf{0}}\,, \ \ t\in\real\,,\\
\medskip\left.\ba{r}\medskip
\dot{{\tilde\bfgamma}}+{\mathbb A}\cdot\tilde\bfchi+\varpi\Int{\partial\Omega_0}{} \mathbb S(\tilde{\bfu},\tilde{p})\cdot\tilde\bfn=c\,\dot{V}(t)\tilde\bfb_\alpha
\\ \medskip
\dot{\tilde\bfchi}={\tilde\bfgamma-V(t)\tilde\bfb_\alpha}\\ \medskip
\dot{\omega}+k\theta+\tau\,\bfe_1\cdot\Int{\partial\Omega_0}{}\tilde{\bfx}\times \mathbb S(\tilde{\bfu},\tilde{ p})\cdot\tilde\bfn=d\,\dot{V}(t)\, \\
\medskip
\dot{\theta}={\omega}
\ea\right\}\ \ \mbox{in $\real$\,,}
\ea\right.
\eeq{01_sym}
which is set on the fixed fluid domain $\Omega$. 
With respect to problem \eqref{01-translation+rotation_or}, we observe that the displacement of the barycenter $\tilde\bfchi$ and the rotation $\theta$ remain decoupled in \eqref{01_sym}. This makes problem \eqref{01_sym} treatable exactly as in \cite{BoGa}. On the other hand, if assumption \eqref{assumption} holds, we also remark that 
	$$
	\int_{\partial \Omega_0}\tilde{\bfx}^\perp\,d\sigma=0\,.
	$$
We can then apply directly the trace inequality in \cite[Exercise II.4.1]{Gab} to upgrade the convergences of the body velocities from weak to strong as in Section \ref{Sec8}, since no mixed term as in  \eqref{mixed_term} do appear.
\bigskip\par
\noindent
{\bf Acknowledgements.} The work of G.P.~Galdi is partially supported by National Science Foundation Grant DMS-2307811.
D. Bonheure and C. Patriarca are supported by the WBI grant ARC Advanced 2020-25 ``PDEs in interaction'' at ULB. D. Bonheure is also partially supported by the Francqui Foundation as Francqui Research Professor 2021-24, the FNRS PDR grant  T.0020.25 and the Fonds Thelam 2024-F2150080-0021313. 

\medskip
%\noindent
%\centerline{\bf Declaration.}
%\begin{itemize}
%\item
%The authors have no competing interests to declare that are relevant to the content of this
%article.
%\item Data sharing not applicable to this article as no datasets were generated or analyzed during
%the current study.
%
%%% ref
%\end{itemize}

\ed